\newtheorem{theorem}{Theorem}[section]
\newtheorem{lemma}{Lemma}[section]
\numberwithin{mytheorem}{section} 
\numberwithin{mylemma}{section} 
\numberwithin{mycorollary}{section} 
\DeclareMathOperator{\pred}{pred}
\DeclareMathOperator{\spl}{sp}
\DeclareMathOperator{\suc}{succ}
\newtheorem{remark}[theorem]{Remark}
\newcommand{\DD}{\mathcal{D}}
\newcommand{\cc}{\mathfrak{c}}
\author{ Jorge Bruno and Paul J. Szeptycki}
\newcommand{\N}{\mathbb{N}}
\title[Proof of TAC for TMR]{A Proof of The Tree Alternative Conjecture Under the Topological Minor Relation}
\begin{document} 
\maketitle

\begin{abstract} In 2006 Bonato and Tardif posed the Tree Alternative Conjecture (TAC): the equivalence class of a tree under the embeddability relation is, up to isomorphism, either trivial or infinite. In 2022 Abdi, et al. provided a rigorous exposition of a counter-example to TAC developed by Tetano in his 2008 PhD thesis. In this paper we provide a positive answer to TAC for a weaker type of graph relation: the topological minor relation. More precisely, letting $[T]$ denote the equivalence class of $T$ under the topological minor relation we show that
\smallskip
\begin{enumerate}
\item $|[T]| = 1$ or $|[T]|\geq \aleph_0$ and
\smallskip
\item  $\forall r\in V(T)$, $|[(T,r)]| = 1$ or $|[(T,r)]|\geq \aleph_0$.
\end{enumerate}
\smallskip
In particular, by means of {\it curtailing} trees, we show that for any tree $T$ with at least one ray with infinitely many vertices with degree at least $3$: $|[T]| \geq 2^{\aleph_0}$.
\end{abstract}
\section{Introduction} 

 In \cite{A} Bonato and Tardif proved that $|[T]| = 1$ or $\infty$ for any rayless tree and conjectured the same must be true of any tree: the Tree Alternative Conjecture (TAC) stated that the number of isomorphism classes of trees mutually embeddable with a given tree $T$ is either 1 or infinite. Since mutually embeddable finite trees are necessarily isomorphic, TAC is becomes true for finite trees, and it has been confirmed for a number of nontrivial classes of infinite trees -  \cite{A}, \cite{LPS} and \cite{T}. In particular, in \cite{T} it is shown that TAC holds for all rooted trees, where a {\it rooted tree} $(T,r)$ is one with a distinguished vertex $r\in V(T)$. In 2022 a counterexample to TAC was found by Abdi et al. \cite{counter} where for each $n\in \N$ an unrooted and locally finite tree $T_n$ is constructed with $|[T_n]| = n$.
 
 
 Herewith we present a proof of TAC with respect to the {\it topological minor relation} $\leq^{\sharp} $, where for trees $T,S$ we have $T \leq^{\sharp} S$ if some subdivision of the tree $T$ is isomorphic to a subgraph of $S$. The rooted topological minor notion is defined analogously to the unrooted case. In this paper we prove the following theorem, where $[T]$ represents the equivalence class of $T$ under the topological minor relation.
  
  \noindent
  {\bf Theorem 1.} For any tree $T$, 
\begin{enumerate}
\item $|[T]| = 1$ or $|T|\geq \aleph_0$ and
\smallskip
\item  $\forall r\in V(T)$, $|[(T,r)]| = 1$ or $|[(T,r)]|\geq \aleph_0$.
\end{enumerate}

\medskip
The techniques developed and employed to prove Theorem 1 heavily rely on the fact the trees are, as discussed in the Background, {\it well-quasi-ordered} (wqo) under the topological minor relation. Although Tyomkyn in \cite{T} establishes TAC for all rooted trees under the embeddability relation, there appears to be a strong connection between wqo and TAC for unrooted trees and other similar structures.

The layout of the paper is quite simple: in Section~\ref{sec:TACforLarge} we prove Theorem 1 for {\it large trees} by employing the method of \textit{curtailing} - developed by the present authors; and Section~\ref{sec:TACforSmall} deals with all {\it small trees} by adapting the techniques developed by Bonato and Tardif in \cite{A}, Halin in \cite{H}, and Polat and Sabidussi in \cite{P} to the topological minor relation.

\section{Background}

\subsection{Graph-theoretic background} Most notation is standard and can be found in \cite{D} but we present a brief summary of key concepts and non-standard terminology employed in this paper. A {\it rooted tree} $(T,r)$ is composed of a tree $T$ and a distinguished vertex $r\in V(T)$. The isomorphism relation on rooted and unrooted trees is denoted by $\simeq$ where, in the rooted case, an isomorphism must send root to root. The {\it splitting number} of a vertex $v$, $\spl(v)$, in a rooted tree $(T,r)$ is simply $\deg(v) -1$ if $v\not = r$ and $\deg(v)$ otherwise.  Any rooted tree $(T,r)$ generates a partial ordering $\leq_T$ on its set of vertices by establishing that $s\leq_T t$ provided that the unique path from $r$ to $t$ contains $s$; if $s <_T t$ then we say that $s$ is {\it below} $t$ and that $t$ is {\it above} $s$. This is called the {\it tree order} on $(T,r)$ and defines a meet semilattice order on all vertices of $(T,r)$; $(T,r)$ is closed under all non-empty meets. We denote the meet of a collection $C \subseteq V(T)$ by $\wedge_T C$ and $v_1 \wedge_T \ldots \wedge_T v_n$ if $C$ is finite.  A tree $(T,r)$ is a {\it rooted subtree} of a tree $(S,s)$ if $T$ is a subtree of $S$ and $v \leq_S w \Longleftrightarrow v\leq_T w$ for all $v,w\in V(T)$. Given a vertex $v \in V(T)$ the {\it full subtree of $(T, r)$ rooted at $v$}, $(T_v, v)$, is the rooted subtree of $(T, r)$ induced by all vertices $w$ with $v \leq_T w$. A {\it subdivision} of a tree $T$ (resp. of a rooted tree $(T,r)$) is any tree (resp. rooted tree) obtained by subdividing any number of its edges. A {\it path}, $p:v_1,\ldots,v_n$, is a sequence of distinct pairwise adjacent vertices (i.e., $v_i$ adjacent to $v_{i+1}$) and we demand $v_i <_T v_{i+1}$ for all $i\leq n-1$ when $p$ is in a rooted tree $(T,r)$. The length of a path $p$, $|p|$, is simply the number of vertices in it. A path $p:v_1,\ldots,v_n$ is {\it bare} if $\deg(v_i)\leq 2$ for all $i\leq n$, and {\it maximal bare} if $p$ is bare and not contained within any other bare path. For vertices $v \leq_T w$ in a rooted tree $(T,r)$, $d_T(v,w) = |p| -1$ where $p:v,\ldots,w$ is the unique path joining said vertices. For the case where $v=r$ we denote $d_T(v,w) = l_T(w)$ and refer to it as the {\it level} of $w$. For a vertex $v$ we use $\pred(v)$ to denote the unique vertex immediately preceding $v$ and $\suc(v)$ to denote the set of immediate successors of $v$. If $\spl(v)=1$ then we use $\suc(v)$ to denote the unique vertex succeeding $v$ instead of the set containing it. Given a ray $R = v_1v_2\ldots$ we use the notation $R_n = v_n$ and $R_n^\uparrow = v_nv_{n+1}\ldots$. A ray $R$ is {\it eventually bare} if for some $N\in \N$, $\deg(R_n) = 2$ for all $n\geq N$, and {\it bare} if $N=1$. A rooted tree is said to be {\it large} if it contains at least one ray that is not eventually bare. A rooted tree is then {\it small} if it's not large. We also define an unrooted tree $T$ to be {\it large} (resp. {\it small}) if $(T,r)$ is large (resp. small) for any $r\in V(T)$. 

\subsection{Topological minor relation and wqos}Given trees $T$ and $S$, an injective map  $\phi: V(T)\to V(S)$ is a {\it minor embedding} if $\phi$ can be extended to an isomorphism between a subdivision of $T$ and the smallest subtree $S'$ of $S$ containing all vertices in $\phi(V(T))$. If there exists such a minor embedding $\phi$ then we say that $T$ is a {\it topological minor} of $S$ and we write $T \leq^\sharp S$. We use the shorter term {\it embedding} when referring to a minor embedding provided there is no danger of ambiguity. If $T \leq^\sharp S$ and $T \geq^\sharp S$ then we write $T \equiv^\sharp S$ and say that they are {\it topologically equivalent} or of the same {\it topological type}. The equivalence class of topological types of a tree is denoted by $[T]$ and its size by $|[T]|$. These topological notions are defined in a similar way for rooted trees. An injective map  $\phi: V(T)\to V(S)$ between rooted trees $(T,r)$ and $(S,s)$ is a {\it rooted minor embedding} if $\phi$ can be extended to an isomorphism between a subdivision of $(T,r)$ and the smallest rooted subtree $(S',s')$ of $(S,s)$ containing all vertices in $\phi(V(T))$. A tree $(T,r)$ is {\it self-similar} provided that there exists $v >_T r$ with $(T,r) \leq^\sharp (T_v,v)$. An equivalent definition for a minor embedding $\phi: V(T)\to V(S)$ is that of a {\it meet semilattice homomorphism}:  for any pair $w,v \in V(T)$, $\phi(v \wedge_T w) = \phi(v) \wedge_S \phi(w)$. If there exists an embedding $\phi: V(T)\to V(S)$ then we say that $(T,r)$ is a {\it rooted topological minor} of $(S,s)$ and we write $(T,r) \leq^\sharp(S,s)$. If $(T,r) \leq^\sharp
(S,s)$ and $(T,r) \geq^\sharp (S,s)$ then we write $(T,r) \equiv^\sharp (S,s)$ and say that they are {\it topologically equivalent}. The equivalence class of topological types of a tree is denoted by $[(T,r)]$ and its cardinality by $|[(T,r)]|$.

A quasi-ordered set $(X,\leq)$ is {\it well-quasi-ordered} (wqo) if it is well-founded and all antichains are finite. The rooted topological minor relation is a wqo on the collection of trees (\cite{Kurs2} and \cite{NW}). We make use of the following equivalent characterisations for a quasi-ordered set $(X,\leq)$ to be a wqo \cite{Mil}: 
\begin{itemize}
\item any sequence $x_1, x_2, \ldots$ in $X$  contains a pair $x_i \leq x_j$ with $i<j$; and
\item any sequence $x_1, x_2, \ldots$ in $X$ has an increasing subsequence $x_{n_1} \leq x_{n_2}\leq \ldots$.
\end{itemize}
\begin{remark} Since the rooted topological minor relation well-quasi orders rooted trees, for trees distinct from the ray or double ray, the notion of being large is equivalent to having a self-similar subtree. In particular, not having a self-similar subtree is equivalent to being small.
 \end{remark}
 \subsection{Set-theoretic background} We use some basic set-theoretic notation and terminology which can be found in \cite{JW} or \cite{Ku}. Recall that $\aleph_0$ is the cardinality of the natural numbers and we use the set-theoretic notation $\omega$ to denote the the set of natural numbers, $\N$, with $0$ included. The {\it continuum} $\cc$ or $2^{\aleph_0}$ denotes the cardinality of the set of real numbers (which is also equal to the cardinality of $\N^\N$, the set of all functions  $f:\N\rightarrow \N$). An {\it almost disjoint family} (a.d.f.) $\DD$ is a collection of infinite subsets of $\N$ so that for any pair $X,Y \in \DD$, $|X\cap Y| \in \N$. It is well know that there exists an almost disjoint family of size $\cc$ (pg. 159, \cite{Ku}).

\section{Large Trees}\label{sec:TACforLarge}

In this section all trees are assumed to be large. We begin by introducing the notion of curtailing and subsequently employ it to prove Theorem 1 for all large trees. The outline is as follows: for a fixed tree $T$ and $r\in V(T)$ we create - by means of curtailing $(T,r)$ - a tree $(T',r)$ of the same topological type as $(T,r)$ and $2^{\aleph_0}$ subdivisions of $(T',r)$, $(T'_\kappa,r)$ with $\kappa \in 2^{\aleph_0}$, of the same topological type as $(T',r)$ but $T'_\kappa\not \simeq T_\gamma'$ for any pair $\kappa\not = \gamma$. Since $T'_\kappa\equiv^\sharp T$ for all $\kappa$ we obtain the following result.

\begin{restatable}{thm}{TAClargeUnrooted}\label{thm:TAClargeUnrooted} For any large tree $T$:
\begin{enumerate}
\item $|[T]| \geq 2^{\aleph_0}$ and
\item $\forall r\in V(T)$, $|[(T,r)]|\geq 2^{\aleph_0}$.
\end{enumerate}
\end{restatable}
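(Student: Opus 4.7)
The plan is to prove (2) and deduce (1) from it. Any unrooted tree admits at most $\aleph_0$ rootings, so each unrooted topological type yields at most $\aleph_0$ rooted types; hence a family of $2^{\aleph_0}$ pairwise non-isomorphic rooted types of $T$ automatically projects to $2^{\aleph_0}$ pairwise non-isomorphic unrooted types. So I fix a large tree $T$, root it arbitrarily at $r \in V(T)$, and choose a ray $R = R_1R_2\ldots$ in $(T,r)$ along which the set $I = \{n : \spl(R_n)\geq 2\}$ is infinite (possible by largeness). For each $n\in I$ I pick a successor $v_n$ of $R_n$ off $R$; the wqo property of the rooted topological minor relation applied to the sequence $(T_{v_n}, v_n)_{n\in I}$ yields an infinite $\leq^\sharp$-increasing subsequence, and after further thinning I arrange it to be strictly increasing and cofinal among rooted subtrees of $(T,r)$.

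Next comes the \emph{curtailing} construction: using the material above, I produce a rooted tree $(T',r) \equiv^\sharp (T,r)$ equipped with a distinguished ray $R' = R'_1R'_2\ldots$ all of whose vertices are splitting vertices, each $R'_n$ carrying a prescribed hanging full subtree $C_n$, with $C_1 <^\sharp C_2 <^\sharp \cdots$ cofinal under $\leq^\sharp$ among rooted subtrees of $(T,r)$. Here $(T',r)\leq^\sharp(T,r)$ will be immediate from the construction, while $(T,r)\leq^\sharp(T',r)$ will use the cofinality of the $C_n$ together with wqo to build the required meet-semilattice embedding. Fixing an almost disjoint family $\DD\subseteq\mathcal{P}(\N)$ with $|\DD|=\cc$, for each $A\in\DD$ I then form $(T'_A,r)$ by subdividing the spine edge $R'_nR'_{n+1}$ exactly once when $n\in A$ and leaving it untouched otherwise; all hanging $C_n$'s are preserved. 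Every $T'_A$ is a subdivision of $T'$, hence $T'_A\equiv^\sharp T'\equiv^\sharp T$, both in the rooted and the unrooted sense.

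The decisive part is showing $(T'_A,r)\not\simeq(T'_B,r)$ whenever $A\neq B$ lie in $\DD$. My intended argument is rigidity: any rooted isomorphism $f\colon T'_A\to T'_B$ fixes $r$ and, because the $C_n$ form a strictly $\leq^\sharp$-increasing cofinal chain, only one vertex of $T'_B$ can carry an isomorphic copy of $C_n$; consequently $f$ must send the $n$-th splitting vertex on the spine of $T'_A$ to the $n$-th splitting vertex on the spine of $T'_B$. The number of subdivision vertices between consecutive splitting vertices is therefore preserved, so the characteristic functions of $A$ and $B$ must agree cofinitely, contradicting $|A\triangle B|=\infty$, which is guaranteed by almost disjointness. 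The step I expect to be the main obstacle is making the curtailing construction rigid enough to support this argument: one must arrange the hanging subtrees so that no isomorphism between two subdivisions of $T'$ can shuffle the $C_n$'s even finitely, while simultaneously ensuring $(T',r)\equiv^\sharp(T,r)$. Once rigidity is secured, the non-isomorphism of $\{T'_A:A\in\DD\}$ and the passage from (2) to (1) are routine bookkeeping.
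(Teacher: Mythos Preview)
Your reduction of (1) to (2) has a genuine gap: the claim that an unrooted tree admits at most $\aleph_0$ rootings presupposes the tree is countable, and nothing in the statement restricts $T$ to be countable (attach, say, a star of size $\aleph_1$ to every vertex of a ray and you have a large uncountable tree). When $|V(T)|>\aleph_0$ the constructed $T'_A$ are uncountable too, a single unrooted isomorphism type can absorb uncountably many rooted types, and your pigeonhole collapses. The paper goes the other way: it proves (1) directly by analysing \emph{unrooted} isomorphisms $T'_f\simeq T'_g$ (its Lemma~\ref{lem:segmentToRay} shows any such isomorphism must eventually map the distinguished ray into the distinguished ray), and (2) then follows immediately because unrooted non-isomorphism implies rooted non-isomorphism.

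The rigidity step, which you correctly flag as the crux, is also not supported by the mechanism you propose. A strictly $\leq^\sharp$-increasing chain $C_1<^\sharp C_2<^\sharp\cdots$ does \emph{not} force $C_n$ to occur at a unique vertex: $C_{n+1}$ may contain many rooted subtrees isomorphic to $C_n$, so an isomorphism could send the $n$-th spine vertex deep into some $C_m$. And ``cofinal among rooted subtrees of $(T,r)$'' cannot be meant literally, since then the full subtree at the first spine vertex (essentially $(T',r)$ itself) would satisfy $(T',r)\leq^\sharp C_m<^\sharp C_{m+1}\leq^\sharp(T',r)$. The paper's curtailing achieves rigidity by a completely different device: one curtails via a fixed self-similar $(S,s)$, which suppresses every maximal bare path sitting immediately above a branching vertex whenever the full subtree there has type $(S,s)$. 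After this, along the distinguished ray one re-inserts bare paths of prescribed lengths below each $r_n$. If an isomorphism sent some $r_l$ off the target ray to a vertex $b_j$, the image of the inserted bare path would give $\spl(\pred(r_l))=1$ on one side but $\spl(\pred(b_j))>1$ on the other (precisely because curtailing removed all such bare paths off the ray), a contradiction. That local degree obstruction---not any global comparison of hanging subtrees---is what pins the ray in place, and it works even for unrooted isomorphisms.
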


\subsection{Curtailing}
\setcounter{theorem}{3}
\setcounter{lemma}{1}
\setcounter{corollary}{4}

Fix a rooted tree $(T,r)$ and a self-similar tree $(S,s)$ distinct from the ray. For any $u \in V(T)$ with $(S,s) \equiv^\sharp (T_u,u)$ and $\spl(u) = 1 < \spl(\pred(u))$ let $p_u: u,\ldots, u_n$ be the maximal bare path starting from $u$; such a path must always exist since $(S,s)$ is not small. Now define $(T',r)$ as the tree obtained from $(T,r)$ by simultaneously replacing any such path $p_u$ with an edge $e_u = \{\pred(u), \suc(u_n)\}$. That is, simultaneously suppress all vertices in all paths $p_u$. We say that $(T,r)$ is \textit{curtailed into $(T',r)$ via $(S,s)$}. By design, all vertices of splitting number greater than $1$ in $(T,r)$ are also present in $(T',r)$. It is entirely possible that $(T,r) \equiv^\sharp (S,s)$ and that $\spl(r)=1$. In which case, the above construction would delete $r$. By Lemma~\ref{lem:pathTopType} below the tree obtained from deleting the maximal bare path starting at $r$ yields a topologically equivalent tree to $(T,r)$. Hence, without loss of generality, we assume that $\spl(r)>1$. Of course, if $(S,s) \not \equiv^\sharp (T_u,u)$ for any $u\in V(T)$ then curtailing $(T,r)$ via $(S,s)$ results in $(T,r)$ itself and, trivially, Lemma~\ref{lem:sameTopType} and Theorem~\ref{thm:curtailDoesntChange} are true. We begin by proving Lemma~\ref{lem:pathTopType} .

\begin{figure}
  \import{}{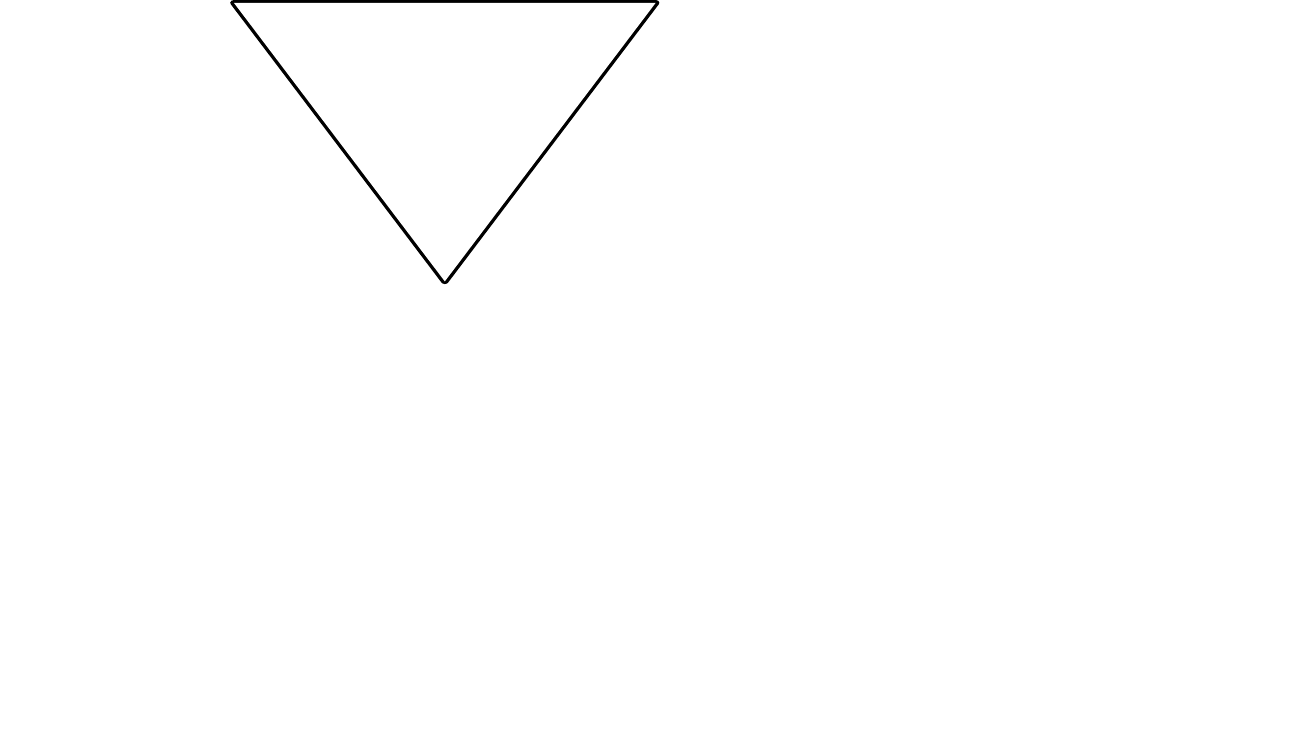tex}
 \caption{Turning a maximal bare path $p_u$ into an edge $e_u$.}
 \label{fig:test}
\end{figure}

\begin{lemma}\label{lem:pathTopType} Let $(T,r)$ be self-similar. It follows that the tree $(T',r_1)$ that results from adding a bare path $p:r_1,\ldots,r_n$ below $r$ is of the same topological type as $(T,r)$.
\end{lemma}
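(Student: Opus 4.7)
The plan is to prove $(T,r) \equiv^\sharp (T',r_1)$ by establishing both directions separately.

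The direction $(T,r) \leq^\sharp (T',r_1)$ is essentially immediate. The inclusion $V(T) \hookrightarrow V(T')$ is a rooted minor embedding: the smallest rooted subtree of $(T',r_1)$ containing $V(T)$ has root $\wedge_{T'} V(T) = r$ and coincides with $T$ itself, which is a trivial subdivision of $(T,r)$.

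The direction $(T',r_1) \leq^\sharp (T,r)$ is where self-similarity enters. Fix a rooted minor embedding $\psi : V(T) \to V(T_{v_0})$ with $v_0 >_T r$, and regard it as a self-map $\psi : V(T) \to V(T)$. The compositions $\psi^k$ remain meet-preserving injections and hence minor embeddings. Because the image of any minor embedding is a subdivision of the source sitting inside the target, one has $d_T(\psi(a),\psi(b)) \geq d_T(a,b)$ whenever $a \leq_T b$; a short induction then yields $d_T(r,\psi^k(r)) \geq k$. Setting $v^* := \psi^n(r)$, the $T$-path from $r$ to $v^*$ has at least $n$ edges, so write it as $r = w_0, w_1, \dots, w_m = v^*$ with $m \geq n$.

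Now define $\phi : V(T') \to V(T)$ by $\phi(r_1) = w_0$, $\phi(r_i) = w_{i-1}$ for $2 \leq i \leq n$, and $\phi(x) = \psi^n(x)$ for every $x \in V(T)$. Injectivity follows from $\psi^n(V(T)) \subseteq V(T_{v^*})$ being disjoint from $\{w_0, \dots, w_{m-1}\}$. The smallest rooted subtree of $(T,r)$ containing $\phi(V(T'))$ is the union of the $r$-to-$v^*$ path with the subdivision $\psi^n(T)$ sitting above $v^*$, and a direct check shows this is a subdivision of $(T',r_1)$: the bare path from $r_1$ up to $r$ is realised by $w_0, w_1, \dots, w_m$ (with $w_n, \dots, w_{m-1}$ acting as subdivision vertices along the final edge when $m > n$), while the copy of $T$ above $r$ in $T'$ is realised by $\psi^n(T)$.

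The main obstacle is the iteration itself: verifying that compositions of minor embeddings are again minor embeddings (routine via the meet-semilattice characterisation) and, more substantively, that $T$-distances genuinely accumulate under iteration, so that a single $\psi^n$ provides enough room on the $r$-to-$\psi^n(r)$ path to absorb an $n$-vertex bare path. Once this is in hand, the construction of $\phi$ and the verification of the subdivision property reduce to straightforward bookkeeping.
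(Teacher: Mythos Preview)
Your proof is correct and follows essentially the same route as the paper's. Both arguments iterate the self-similarity witness to push the root far enough up that the $r$-to-$\psi^{k}(r)$ path has at least $n$ vertices, then map the added bare path onto an initial segment of that path and the copy of $T$ onto the iterated image. You are somewhat more explicit than the paper in justifying that iteration actually produces the required distance (via $d_T(\psi(a),\psi(b))\geq d_T(a,b)$ and the telescoping induction), whereas the paper simply asserts that a suitable $k$ can be chosen; but the underlying idea is identical.
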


\begin{proof} We need only show that  $(T',r_1) \leq^\sharp (T,r)$. Since $(T,r)$ is self-similar we can create a sequence $r<_T \phi(r) <_T \phi^2(r) <_T \ldots$ where $\phi: V(T) \to V(T)$ witnesses $(T,r)$'s is self-similarity. Choose any $k$ so that given the path $p':w_1 = r,\ldots, w_j = \phi^k(r)$ we have $j>n = |p:r_1,\ldots,r_n|$. Let $\psi:V(T') \to V(T)$ with $\psi(v) = \phi(v)$ for all $v\geq_{T'} r$ and $\psi(r_i) = w_j$. Then $\psi$ is clearly a minor embedding.
\end{proof}

\begin{lemma}\label{lem:sameTopType} Let $(T',r)$ be $(T,r)$ curtailed via $(S,s)$. For any $v\in V(T)$ with $\spl(v) >1$ if $(T_v,v) \equiv^\sharp (S,s)$, then $(T'_v,v) \equiv^\sharp (S,s)$.
\end{lemma}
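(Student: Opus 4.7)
The plan is to establish $(T'_v, v) \equiv^\sharp (S, s)$, from which the conclusion follows by transitivity of $\equiv^\sharp$, by proving each of the two inequalities separately. The easy direction, $(T'_v, v) \leq^\sharp (S, s)$, is essentially immediate: by construction, $T_v$ is literally a subdivision of $T'_v$, since each edge $e_u$ introduced by curtailing some $u \geq_T v$ re-expands into the corresponding bare path $p_u$. Hence the set-theoretic inclusion $V(T'_v) \hookrightarrow V(T_v)$ is a rooted minor embedding fixing $v$ (and $v \in V(T'_v)$ because $\spl(v) > 1$), and composing with $(T_v, v) \equiv^\sharp (S, s)$ yields $(T'_v, v) \leq^\sharp (S, s)$.

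For the harder direction $(S, s) \leq^\sharp (T'_v, v)$, I would start with a rooted minor embedding $\phi : V(S) \to V(T_v)$ witnessing $(S, s) \leq^\sharp (T_v, v)$ and modify it into a rooted minor embedding $\psi : V(S) \to V(T'_v)$. The only obstruction to restricting $\phi$ is that $\phi$ may send some $x \in V(S)$ to an internal vertex of a curtailed bare path $p_{u_i}$. But such a target has degree $2$ in $T_v$, which forces $\deg_S(x) \leq 2$ and means $\phi$ passes straight through $p_{u_i}$ without branching there. For each offending $u_i$, I would use that $(T_{u_i}, u_i) \equiv^\sharp (S, s)$ is self-similar, together with Lemma~\ref{lem:pathTopType}, to locally re-route the restriction of $\phi$ to the subtree $S_{y_i} = \phi^{-1}(T_{u_i})$ of $S$: replace it with an embedding of $(S_{y_i}, y_i)$ directly into $(T_{\suc(u_{i,n})}, \suc(u_{i,n}))$, pushing the image past the bare path so that it lands inside $V(T'_v)$.

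The main obstacle will be assembling these local re-routings into a single globally consistent embedding $\psi$, preserving injectivity and the meet-semilattice property that characterises a minor embedding. Fortunately the bare paths $p_{u_i}$ are pairwise vertex-disjoint in $T_v$, so distinct re-routings land in disjoint regions and cannot interfere with one another; this disjointness, together with the wqo character of the rooted topological minor relation on trees, should suffice to carry out all the modifications simultaneously and to conclude $(S,s) \leq^\sharp (T'_v, v)$.
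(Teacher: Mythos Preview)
Your overall strategy matches the paper's: the easy direction $(T'_v,v)\leq^\sharp(S,s)$ is identical, and for the hard direction both you and the paper start from an embedding $\phi:V(S)\to V(T_v)$ and use the self-similarity of $(S,s)$ to re-route around vertices deleted by curtailing. The paper, however, does not attempt a single simultaneous pass of local fixes; it builds $\psi$ by induction on the levels $L_n$ of $S$, at each stage checking whether the tentative image lies in $V(T')$ and, if not, composing with a self-similarity map $\eta$ to push past the offending bare path before proceeding upward.

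There is a genuine gap in your sketch at exactly this point. You propose one round of local re-routings and assert that the results ``land inside $V(T'_v)$'' and ``cannot interfere with one another'' because the curtailed bare paths $p_{u_i}$ are pairwise vertex-disjoint. But the $p_{u_i}$ can be \emph{nested} in the tree order: if $u_1<_T u_2$ are both curtailed then $T_{u_2}\subset T_{u_1}$, so the preimages $S_{y_i}=\phi^{-1}(T_{u_i})$ are nested rather than disjoint, and your re-routings are not independent. More seriously, when you re-route $S_{y_1}$ into $(T_{\suc(u_{1,n})},\suc(u_{1,n}))$ via Lemma~\ref{lem:pathTopType}, the target is a subtree of $T$, not of $T'$; the new image may itself land on some deeper curtailed path $p_{u_2}$, creating a fresh offence. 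A single pass therefore does not guarantee the final image lies in $V(T'_v)$, and iterating along an infinite branch of $S$ need not terminate a priori. The paper's level-by-level induction on $S_n$ is precisely what resolves this: each vertex of $S$ gets its value in $V(T'_v)$ at a finite stage, and the meet-semilattice property is checked case by case. Your invocation of the wqo property does not substitute for this---wqo plays no role in the paper's proof of this lemma.
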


\begin{proof} Since $(T_v,v)$ is a subdivision of $(T'_v,v)$ then $(T'_v,v)\leq^\sharp (S,s)$ whenever $(T_v,v) \leq^\sharp (S,s)$. For the reverse inequality, let $v\in V(T)$ with $\spl(v) >1$, $(T_v,v) \leq^\sharp (S,s)$ witnessed by a $\phi: V(S) \to V(T_v)$ and, by Lemma~\ref{lem:pathTopType}, assume that $\spl(s) >1$. Next, we construct a rooted minor embedding $\psi: V(S) \to V(T'_v)$ witnessing $(T'_v,v) \geq^\sharp (S,s)$. In order to so do, fix a minor rooted embedding $\eta: V(S) \to V(S)$ witnessing $(S,s)$'s self-similarity (i.e., $\eta(s) >_{S} s)$), set 
\[
L_n = \{w\in V(S) \mid l_{S}(w) = n\}
\]
and put $(S_n,s)$ as the full subtree of $(S,s)$ generated from all vertices in $L_0 \cup \ldots \cup L_n$. We construct $\psi$ by inducting on the $(S_n,s)$'s. More precisely, for any $k\in \N$ we focus on the least $n_k\in \N$ for which $\psi(u)$ is not defined for some $u\in V(S_{n_k})$. By the minimality of $n_k$, $\psi\upharpoonright (S_{n_k},s)$ will be a rooted topological minor embedding.\\

\noindent
\textbf{Base Case:} for any $u\in L_1$ if $\phi(u)\in V(T'_v)$, then we let $\psi(u) = \phi(u)$; since $\spl(s) >1$ then $\phi(s) \in V(T'_v)$ and $\psi(s) = \phi(s)$. Otherwise, $\spl(\phi(u)) =1$ and $(S,s) \equiv^\sharp (T_{\phi(u)},\phi(u))$. That is, $\phi(u)$ was removed when constructing $(T',r)$ because $(S,s) \equiv^\sharp (T_{\phi(u)},\phi(u))$. Let $\phi_u: V(S) \to V(T_{\phi(u)})$ witness $(S,s) \leq^\sharp (T_{\phi(u)},\phi(u))$ and consider  the increasing sequence $\phi_u\circ \eta(s) <_{T}\phi_u\circ \eta^2(s)<_{T}\ldots$. If $(S_u, u)$ is a bare ray $u=u_1u_2\ldots$ then extend $\psi$ by letting $\psi(u_i) = \phi_u\circ \eta^i(s)$. Otherwise, let $p_u: u = u_1, \ldots, u_m$ be the maximal bare path in $(S,s)$ starting with $u$ and extend $\psi$ by letting $\psi(u_i) = \phi_u\circ \eta^i(s)$ and $\psi(u_{m+1}) = \phi_u\circ \eta^m(u_{m+1})$. It is simple to verify that $\psi$ is indeed a meet semilattice homomorphism on $(S_1,s)$\\

\noindent
\textbf{Inductive Case:} at stage $k\in \N$, let $n_k\in \N$ be the smallest so that there exists a vertex $u \in V(S_{n_k})$ where $\psi$ is undefined and so that $\psi: (S_i,s) \to (T,r)$ defines a meet semilattice homomorphism for all $i< n_k$. By construction, either 
\begin{itemize}
\item $\psi(\pred(u)) = \phi_w\circ\eta^j(\pred(u))$, for some $w<_S u$ and $j\in \N$, or
\item $\psi(\pred(u))  = \phi(\pred(u))$. 
\end{itemize}
Set $u^*$ to be $\phi(u)$ or $\phi_w\circ\eta^j(u)$ depending on the case above that is true. If $u^* \in V(T')$ then set $\psi(u) = u^*$.  Otherwise, as with the base case, it must be that $\spl(u^*) =1$ and $(S,s) \leq^\sharp (T_{u^*},u^*)$ with embedding $\phi_{u}: V(S)\to V(T_{u^*})$. In which case, consider  the increasing sequence $\phi_u\circ \eta(s) <_{T}\phi_u\circ \eta^2(s)<_{T}\ldots$. If $(S_u, u)$ is a bare ray $u=u_1u_2\ldots$ then extend $\psi$ by letting $\psi(u_i) = \phi_u\circ \eta^i(s)$. Otherwise, let $p_u: u = u_1, \ldots, u_m$ be the maximal bare path in $(S,s)$ starting with $u$ and extend $\psi$ by letting $\psi(u_i) = \phi_u\circ \eta^i(s)$ and $\psi(u_{m+1}) = \phi_u\circ \eta^m(u_{m+1})$. 

To show that $\psi$ is a meet semilattice homomorphism on $(S_n,s)$ we must verify a few cases separately. Firstly, observe that by construction for any pair $w,w'\in V(S_{n_k})$ it follows that $w\leq_S w'$ if, and only if, $\psi(w)\leq_{T'}\psi(w')$. If $\pred(w) \not = \pred(w')$, then 
\[
\psi(w\wedge_S w') = \psi(\pred(w) \wedge_S \pred(w')) = \psi(\pred(w))\wedge_{T'}\psi(\pred(w'))
\]
 by the inductive hypothesis and 
 \[
 \psi(\pred(w))\wedge_{T'}\psi(\pred(w')) = \psi(w) \wedge_{T'} \psi(w')
 \]
  since $\psi(w)\bot_{T'} \psi(w')$, $\psi(\pred(w)) \leq_{T' }\psi(w)$ and $\psi(\pred(w')) \leq_{T'} \psi(w')$. Otherwise we have that $\pred(w) = \pred(w')$. If $\psi(\pred(w)) = \phi(\pred(w))$, then, by construction, it follows that $\psi(w) \geq_T \phi(w)$ and $\psi(w') \geq_T \phi(w')$. Hence, 
  \[ 
  \psi(w) \wedge_{T'} \psi(w') = \phi(w) \wedge_T \phi(w') = \phi(w \wedge_S w') = \psi(w \wedge_S w').
  \]
   Finally, if $\psi(\pred(w)) = \phi_t\circ\eta^j(\pred(w))$ for some $t\leq_S \pred(w)$ and $j \in \N$, then $\psi(w) \geq_T  \phi_t\circ\eta^j(w)$ and $\psi(w') \geq_T  \phi_t\circ\eta^j(w')$. Therefore, $\psi(w) \wedge_{T'} \psi(w') = \psi(w) \wedge_{T} \psi(w') = \phi_t\circ\eta^j(w) \wedge_T \phi_t\circ\eta^j(w') = \phi_t\circ\eta^j(w \wedge_S w') = \psi(w \wedge_S w').$
\end{proof}

\begin{theorem}\label{thm:curtailDoesntChange} Let $(T',r)$ be $(T,r)$ curtailed via $(S,s)$. It follows that for any $v\in V(T')$, $(T'_v,v) \equiv^\sharp (T_v,v)$. In particular, $(T,r) \equiv^\sharp (T',r)$.
\end{theorem}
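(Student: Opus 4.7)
The plan is to establish, for each $v \in V(T')$, the two inequalities $(T'_v, v) \leq^\sharp (T_v, v)$ and $(T_v, v) \leq^\sharp (T'_v, v)$ separately. A useful preliminary observation is that curtailing is local: for any $v \in V(T')$, the tree $(T'_v, v)$ coincides with the curtailing of $(T_v, v)$ via $(S, s)$, since the suppression criterion on a vertex $u \neq v$ refers only to $T_u$ and $\spl(\pred(u))$, both of which survive restriction to $T_v$. The ``in particular'' clause then follows by taking $v = r$.

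The inequality $(T'_v, v) \leq^\sharp (T_v, v)$ holds because $T_v$ is literally obtained from $T'_v$ by subdividing each edge $e_u$ (for every suppressed path $p_u$ inside $T_v$) into the corresponding bare path $p_u$ of length $n+1$. The inclusion $V(T'_v) \hookrightarrow V(T_v)$ then sends root to root, preserves the tree order, and witnesses $T_v$ as a subdivision of $T'_v$, yielding the required rooted minor embedding.

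For the substantive direction $(T_v, v) \leq^\sharp (T'_v, v)$, I plan to construct $\psi: V(T_v) \to V(T'_v)$ by adapting the level-by-level inductive procedure from the proof of Lemma~\ref{lem:sameTopType}, using the identity on $V(T_v)$ as the base ``reference embedding'' in place of the $\phi: V(S) \to V(T_v)$ used there. Set $\psi(v) = v$ and extend level by level: on non-suppressed vertices $w$ whose parent was mapped by the identity, set $\psi(w) = w$; at each vertex $u$ beginning a suppressed path $p_u : u, u_2, \ldots, u_n$ inside $T_v$, invoke the self-similarity of $(T_u, u) \equiv^\sharp (S, s)$ by choosing a witnessing rooted embedding $\eta : V(T_u) \to V(T_u)$ and iterating to obtain $\eta^k(u)$ strictly below $\suc(u_n)$ at a vertex of $V(T')$, with $k$ large enough that the $T'$-path from $\pred(u)$ through $e_u$ down to $\eta^k(u)$ admits at least $n$ intermediate vertices. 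Compose $\eta^k$ with the embedding supplied by Lemma~\ref{lem:sameTopType} near this deep copy to re-embed the entire subtree $T_u$ into $T'_{\suc(u_n)}$; thread the bare vertices $u, u_2, \ldots, u_n$ along the $T'$-path from $\pred(u)$ down to $\eta^k(u)$. Nested suppressed paths inside $T_u$ are subsumed by this outer re-routing.

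The main obstacle is to verify that $\psi$ is a meet semilattice homomorphism simultaneously across all suppressed paths, paying close attention to the boundary where a re-routed block meets the identity region. The verification parallels the three-case analysis closing the inductive step of Lemma~\ref{lem:sameTopType}'s proof: distinct parents force meets through the inductive hypothesis; a common parent mapped by the identity has meets preserved directly; and a common parent mapped by a re-routing $\eta^k$ preserves meets because $\eta^k$ is itself a minor embedding extending continuously from $\pred(u) \in V(T')$.
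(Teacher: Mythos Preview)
Your plan is essentially the paper's: the easy direction via subdivision, then a level-by-level construction of $\psi:V(T_v)\to V(T'_v)$ that is the identity on surviving vertices and switches to a bespoke embedding once a suppressed $u$ is reached. The difference is in how that bespoke embedding is produced.

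The paper's move at a suppressed $u$ is shorter than yours. Let $u^*=\suc(u_n)$. Since $\spl(u^*)>1$ and $(T_{u^*},u^*)\equiv^\sharp(S,s)$ (by Lemma~\ref{lem:pathTopType}, the bare prefix $p_u$ does not change topological type), Lemma~\ref{lem:sameTopType} applied at $u^*$ gives $(T'_{u^*},u^*)\equiv^\sharp(S,s)\equiv^\sharp(T_u,u)$. One then simply fixes any witness $\phi_u:(T_u,u)\to(T'_{u^*},u^*)$ and sets $\psi\upharpoonright T_u=\phi_u$. This single embedding already absorbs the bare path $p_u$ and all nested suppressions inside $T_u$, so no iteration of a self-similarity map and no manual threading of $u,\ldots,u_n$ is needed; the boundary check reduces to $\psi(\pred(u))=\pred(u)<_{T'}u^*\leq_{T'}\phi_u(u)$.

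Your version instead iterates a self-map $\eta$ of $T_u$, looks for $\eta^k(u)\in V(T')$ deep enough, threads $p_u$ along the $T'$-path to it, and only then invokes Lemma~\ref{lem:sameTopType}. This can be made to work, but it buys nothing and adds two burdens: you must argue that some $\eta^k(u)$ actually lies in $V(T')$ (not automatic, since $\eta^k(u)$ might itself sit on a suppressed bare path), and your composition ``$\eta^k$ followed by the embedding from Lemma~\ref{lem:sameTopType}'' is not quite well-typed as stated, since that lemma yields an equivalence $(T'_w,w)\equiv^\sharp(S,s)$ rather than a map $T\to T'$. Once you unwind what you need, you are back to choosing a single $\phi_u:(T_u,u)\to(T'_{u^*},u^*)$, which is exactly the paper's step. (Minor point: in the tree order used here, ``below'' means closer to the root, so you want $\eta^k(u)$ strictly \emph{above} $\suc(u_n)$, not below.)
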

\begin{proof} Clearly, $(T'_v,v) \leq^\sharp (T_v,v)$. For the reverse inequality, notice that if $(S,s) \equiv^\sharp (T_v,v)$, then by Lemma~\ref{lem:sameTopType} we are done. Otherwise, much in the same spirit as with Lemma~\ref{lem:sameTopType}, we construct a minor embedding $\psi:V(T_v) \to V(T_v')$ by inducting on the full subtrees $(T_v^n,v)$ of $(T_v,v)$ generated by $L_0 \cup \ldots \cup L_n$ where
\[
L_n = \{w\in V(T_v) \mid l_{T_v}(w) = n\}.
\]

\noindent
\textbf{Base Case:} since $L_0 = \{v\}$ and, by assumption, $(S,s) \not \equiv^\sharp (T_v,v)$ then $v\in V(T'_v)$ and we let $\psi(v) = v$. Take any $u\in L_1$. If $u\in V(T'_v)$ then we let $\psi(u) = u$. Otherwise, if $u\not \in V(T')$ then $(S,s) \equiv^\sharp (T_u,u)$ and $\spl(u) =1$. Let $p_u: u=u_1,\ldots, u_{n}$ be the maximal bare path starting at $u$ and put $\suc(u_n) = u^*$. Since $(S,s) \equiv^\sharp (T_u,u)$ then Lemma~\ref{lem:pathTopType} yields $(T_u,u) \equiv^\sharp (T_{u^*},u^*)$ and by Lemma~\ref{lem:sameTopType}, $(S,s) \equiv^\sharp (T'_{u^*},u^*)$. In turn, $(T_u,u) \equiv^\sharp (T'_{u^*},u^*)$ and we extend $\psi$ to all $w\geq_T u$ by mapping $w\mapsto \phi_u(w)$ where $\phi_u: V(T_u) \to V(T'_{u^*})$ witnesses $(T_u,u) \leq^\sharp (T'_{u^*},u^*)$. It is clear that $\psi$ restricted to $(T_v^1,v)$ defines a meet semilattice homomorphism.\\

\noindent
\textbf{Inductive Case:} set $k \in \N$ as the smallest for which we can find a vertex $u \in V(T_v^{n_k})$ where $\psi$ is not defined. This implies that $\psi(\pred(u)) = \pred(u)$ and we apply the same logic as with the base case; distinguish between whether or not $(S,s) \equiv^\sharp (T_u,u)$ and extend $\psi$ accordingly. The meet semilattice homomorphism property of $\psi$ is verified by cases as with Lemma~\ref{lem:sameTopType}.
\end{proof}

\subsection{Proof of Theorem~\ref{thm:TAClargeUnrooted}} Let us begin by restating the theorem.

\TAClargeUnrooted*

\begin{proof}

(1) Start with a large tree $T$, a not eventually bare ray $v_0v_1\ldots$ witnessing this. Root $T$ at $r = v_0$ and put $v_k$ as the first vertex along the ray with $(T_{v_k},v_k)\leq^\sharp (T_{v_j},v_j)$ for some $j>k>0$ as rooted subtrees of $(T,r)$. Set $s=v_k$ and $(S,s) = (T_{v_k},v_k)$ and by Lemma~\ref{lem:pathTopType} we can assume that both  $\spl(s) > 1$ and $\spl(r) >1$. Since $(S,s)$ is self-similar there exists an embedding $\phi:V(S) \to V(S)$ witnessing this. Set $r_1 = s$, $r_n = \phi^{n}(s)$ for $n>2$, and put $D: r_1 \ldots $ as the unique ray in $(T,r)$ containing all $r_n$'s. Observe that $\phi$ establishes that $(T_{r_i},r_i) \equiv^\sharp (T_{r_j},r_j)$ for all $i,j \geq 1$ and that $\phi^i(V(D_i^\uparrow))\subseteq V(D_i^\uparrow)$. Let $(T',r)$ denote the curtailed tree $(T,r)$ via $(S,s)$ and denote the ray $D'$ in $(T',r)$ as the obtained from $D$ in $(T,r)$. A moments thought reveals that one can modify $\phi$ into an embedding $\eta:V(T'_{r_1}) \to V(T'_{r_1})$ with $\eta(r_i) = r_{i+1}$ and $\eta(V(D^{'\uparrow}))\subseteq V(D^{'\uparrow})$. This is since all vertices of degree at most $3$ in $D$ remain in $D'$ (e.g., all roots $r_i$ remain in $D'$) and by Theorem~\ref{thm:curtailDoesntChange} all subtrees of $(T_{r_1},r_1)$ rooted on vertices off the ray $D$ remain topologically equivalent to their curtailed versions. Set $e_n = \{\pred(r_n),r_n\}$ in $(T',r)$ for each $n\in \N$, and for each $f \in \N^\N$ we let $(T'_f,r)$ denote the tree generated from $(T',r)$ by replacing the edge $e_n$ with a bare path of length $f(n)$. Let $D'_f$ denote the subdivision of $D'$ in $(T'_f,r)$. Observe that by curtailing $(T,r)$ and subsequently subdividing $D'$ the only bare paths in $D'_f$ above $\pred(r_1)$ are those bare paths ending in an $r_n$. Next we prove that
\begin{itemize}
\item $(T'_f,r) \equiv^\sharp (T,r)$ for all $f\in \N^\N$ (hence $T'_f \equiv^\sharp T$), and
\item $\exists \mathcal{D} \subset \N^\N$ of size $\mathfrak{c}$ where $T'_f \not \simeq T'_g$ for all $f,g\in \mathcal{D}$.
\end{itemize}


\begin{lemma}\label{lem:segmentToRay} Assume that $T'_f \simeq T'_g$ for some $g,f \in \N^\N$ with bijection $\psi^f_g : V(T'_f) \to V(T'_g)$ witnessing this. It follows that $\psi^f_g$ must map a final segment of $D'_f$ entirely within $D'_g$. 
\end{lemma}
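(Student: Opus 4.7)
The plan is to show that the image ray $D'' := \psi^f_g(D'_f)$ in $T'_g$ must share a cofinite tail with $D'_g$; that is exactly what the lemma asks for. Note first that $D''$ is indeed a ray, since $\psi^f_g$ is a graph isomorphism and $D'_f$ is a ray. The reduction I would use is a standard fact about trees: any two rays either share a tail or meet in a finite set. So it is enough to exhibit an infinite intersection $D'' \cap D'_g$, from which a tail of $D''$ (equivalently, $\psi^f_g$ of a final segment of $D'_f$) lies inside $D'_g$.

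To produce that infinite intersection I would exploit the defining property of each $r_n$ in $T'_f$: removing the edge $\{\pred(r_n), r_n\}$ leaves a component containing $r_n$ that, rooted at $r_n$, is topologically equivalent to $(S,s)$. This is a purely graph-theoretic property of the unrooted tree $T'_f$, so it is preserved by $\psi^f_g$: each $\psi^f_g(r_n)$ is a vertex of $T'_g$ having a neighbor whose removal isolates an $(S,s)$-equivalent rooted component at $\psi^f_g(r_n)$. I will call such vertices of $T'_g$ the $S$-portals. Then every $\psi^f_g(r_n)$ is an $S$-portal lying on $D''$.

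The decisive step is to argue that, modulo a finite initial portion, every $S$-portal of $T'_g$ belongs to $D'_g$. Granted this, infinitely many $\psi^f_g(r_n)$ land on $D'_g$, giving $|D'' \cap D'_g| = \aleph_0$ and hence the tail-sharing conclusion. To justify the confinement of $S$-portals to $D'_g$, I would combine three ingredients: the observation immediately preceding the lemma, that the maximal bare sub-paths in $D'_g$ above $\pred(r_1)$ all terminate at some $r_n$, so the subdivided edges $e_n$ leave a detectable footprint along $D'_g$; Lemma~\ref{lem:sameTopType} together with Theorem~\ref{thm:curtailDoesntChange}, which pin down precisely where $(S,s)$-equivalent rooted subtrees can live in a curtailed tree; and the minimality in the choice of $s=v_k$ as the first vertex on the original not-eventually-bare ray with the self-similar property, which prevents $S$-portals from accumulating deep inside side-branches.

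The main obstacle is precisely this confinement of $S$-portals to $D'_g$. A priori $T'_g$ could contain a side-branch off $D'_g$ that itself carries a ray with infinitely many $S$-portals, and then $\psi^f_g$ could send the tail of $D'_f$ into such a branch rather than into $D'_g$. Ruling this out is where the self-similar structure of $(S,s)$, the canonical sequence $r_n = \phi^n(s)$, and the bare-path-suppression performed by curtailing must all be combined; once the confinement holds, the lemma follows from the tree-ray dichotomy used in the first paragraph.
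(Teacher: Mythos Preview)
Your reduction to showing that $D'':=\psi^f_g(D'_f)$ meets $D'_g$ infinitely often is fine, but the $S$-portal notion you introduce is too weak to support the confinement claim in your step~5. As you define it, an $S$-portal is any vertex with a neighbour whose deletion leaves an $(S,s)$-equivalent component. In a tree such as the full binary tree (which is large, with every full subtree $\equiv^\sharp$ to the whole) \emph{every} vertex is an $S$-portal, yet $D'_g$ is a single ray; so ``all but finitely many $S$-portals lie on $D'_g$'' is simply false in general. The feature of $r_n$ you have discarded is that the relevant neighbour $\pred(r_n)$ has degree~$2$: the subdivision of $e_n$ deliberately plants a bare path immediately below $r_n$. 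That local degree datum is exactly what curtailing controls off $D'_g$, and without it your confinement step cannot go through. Also, the minimality of $s=v_k$ is a red herring here; it is not used at all in the paper's argument for this lemma.

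The paper does not attempt any global classification of portals. It argues by contradiction: if no tail of $D'_f$ lands in $D'_g$, then some tail is carried into a ray $B$ disjoint from $D'_g$, and one chooses a single $r_l$ with image $b_j\in B$. Since the images are eventually $\leq_{T'_g}$-increasing, the isomorphism identifies $([T'_g]_{b_j},b_j)$ with $([T'_f]_{r_l},r_l)\equiv^\sharp(S,s)$, while by design $\spl(\pred(r_l))=1$. The contradiction comes from curtailing: off $D'_g$ the tree $T'_g$ agrees with $T'$, and in $T'$ no vertex at the top of a maximal bare path can carry an $(S,s)$-equivalent upper cone (otherwise that bare path would have been suppressed when forming $T'$; here one uses Theorem~\ref{thm:curtailDoesntChange} and Lemma~\ref{lem:pathTopType}). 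Thus $\spl(\pred(b_j))>1$, contradicting $\spl(\pred(r_l))=1$ via the isomorphism. If you want to salvage your route, strengthen ``$S$-portal'' to require that the deleted neighbour have degree~$2$; then the curtailing argument above gives the confinement you need, and the rest of your outline goes through.
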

\begin{proof} 
Assume that $\psi^f_g$ does not map a final segment of $D'_f= u_1\ldots$ entirely within $D'_g$, put 
\[
m = \min\{n\in \N \mid  \forall j\geq n , \psi^f_g(u_j) \not \in D_g' \text{ and } \psi^f_g(u_j) \leq_{T'_g} \psi^f_g(u_{j+1})\}
\]
 and $B= b_1\ldots$ the ray in $(T'_g,r)$ containing all vertices $\psi^f_g(u_n)$ with $n\geq m$ (i.e., the ray $B$ is disjoint from $D_g'$, as in Figure 2). Recall that all of the $r_n$ (i.e., those used to define $D$ in $(T,r))$ are in $D'_f$. Let $r_l \in V(D'_f)$ so that $\phi^f_g(r_l) = b_j$ for some $j>2$ (we can choose an arbitrarily large value of $j$ since the $\psi^f_g(r_i)$ are cofinal in $B$ for $i\geq l$). It follows that the rooted tree $([T'_g]_{b_j},b_j)$ (i.e., the full subtree of $(T'_g,r)$ rooted at $b_j$) is isomorphic to $(T_{r_l},r_l)$. Indeed, $m$ was chosen so that $r_l >_{T_g'} r_{l-1}$ and, thus, $\psi^f_g$ preserves the order of $(T_{r_l},r_l)$ in  $(T'_g,r)$. In turn,  $([T'_g]_{b_j},b_j)$ and $r_l >_{T_g'} r_{l-1}$ are of the same topological type as $(S,s)$. But since $\spl(\pred(b_j))>1$ in $(T'_g,r)$ and $\spl(\pred(r_l)) = 1$ by design of $(T'_f,r)$ this contradicts that $\psi^f_g$ is an isomorphism.
\end{proof}
\begin{figure}
  \import{}{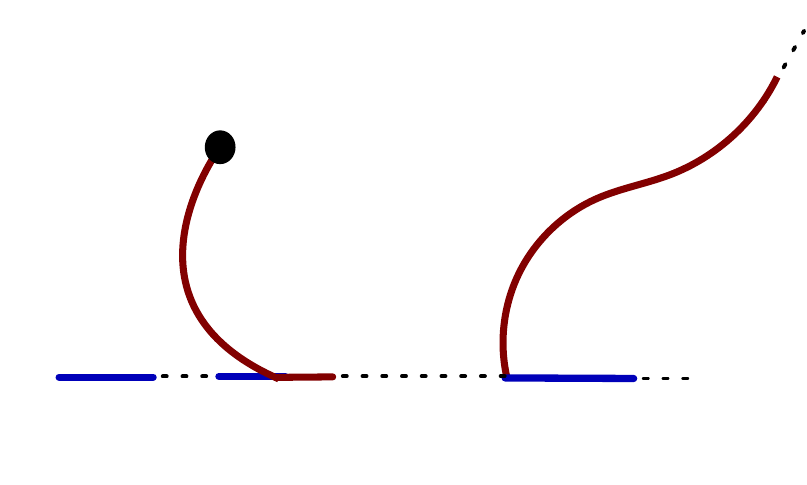tex}
 \caption{The ray $B$ in green with initial vertex $b_1$. Also the initial segment of the ray $\psi^f_g(D'_f)$ and the ray $D'_g$ highlighted in red and blue, resp.}
 \label{fig:test}
\end{figure}
Next, we focus on proving that $T'_f \not \simeq T'_g$ for continuum many $f,g\in \N^\N$. By Lemma~\ref{lem:segmentToRay} given $f,g\in\N^\N$ with $\psi^f_g : V(T'_f) \to V(T'_g)$ witnessing $T'_f \simeq T'_g$ it follows that $\psi^f_g$ must map a final segment of $D'_f$ entirely within $D'_g$. Given that $\psi^f_g$ maps bare paths to bare paths, the final segment of $D'_f$ that is mapped entirely within $D_g'$ must have its bare paths of the same length as those in $D'_g$.  Let $\DD$ be an a.d.f. of size $\cc$ and for each $X=\{x_1,x_2,\ldots\} \in \DD$ set $f_X(n) = x_n$. Then $T'_{f_X} \simeq T'_{f_Y}$ precisely when $X=Y$. To finish the proof of Theorem~\ref{thm:TAClargeUnrooted} (1) we establish the following result.

\begin{lemma} For all $f \in \N^\N$, $T_f' \equiv^\sharp T$.
\end{lemma}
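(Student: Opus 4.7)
The plan is to deduce $T_f' \equiv^\sharp T$ by proving $T_f' \equiv^\sharp T'$, since Theorem~\ref{thm:curtailDoesntChange} already gives $T \equiv^\sharp T'$. One inequality, $T' \leq^\sharp T_f'$, is immediate: by construction $T_f'$ is literally a subdivision of $T'$ (each edge $e_n$ being replaced by a bare path of length $f(n)$), so the inclusion $V(T') \hookrightarrow V(T_f')$ is itself a topological minor embedding.

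For the substantive direction $T_f' \leq^\sharp T'$, I would construct a topological minor embedding $\psi \colon V(T_f') \to V(T')$ by induction along $D_f'$, in the style of Lemma~\ref{lem:sameTopType}. Below $\pred(r_1)$ put $\psi$ equal to the identity. Walking up the ray through the split vertices $r_1, r_2, \ldots$, at stage $n$ pick $k_n$ (with $k_1 < k_2 < \cdots$) large enough that the $D'$-path in $T'$ from $\psi(\pred(r_n))$ to $r_{k_n}$ has length at least $f(n)$; this is always possible because $\eta$ is a topological minor embedding sending $r_i$ to $r_{i+1}$, so iterating $\eta$ yields paths along $D'$ of arbitrary length. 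Set $\psi(r_n) = r_{k_n}$ and map the bare path of length $f(n)$ that replaces $e_n$ in $T_f'$ injectively onto an initial segment of this $D'$-path in $T'$. For the subtrees of $T_f'$ hanging off each $r_n$, I would rely on the topological equivalence $(T_{r_n}', r_n) \equiv^\sharp (T_{r_{k_n}}', r_{k_n})$ (which holds since both subtrees are topologically equivalent to $(S,s)$ by construction of the $r_i$'s and Theorem~\ref{thm:curtailDoesntChange}) to embed them into the analogous subtrees above $r_{k_n}$ in $T'$.

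The main technical obstacle is showing that the smallest subtree $S'$ of $T'$ spanned by $\psi(V(T_f'))$ is isomorphic to a subdivision of $T_f'$. At each intermediate split vertex $r_j$ of $D'$ that lies on the image of some subdivided edge but is not itself one of the chosen $r_{k_n}$'s, one must verify that none of the subtrees hanging off $r_j$ meet $S'$; otherwise $r_j$ would have degree $\geq 3$ in $S'$ and contribute an extra branching not present in $T_f'$. By routing the off-ray subtrees of $T_f'$ only through the $r_{k_n}$'s, and repeating the same meet-semilattice case analysis as in the inductive step of Lemma~\ref{lem:sameTopType} to confirm that $\psi$ preserves predecessors and meets, we obtain the required topological minor embedding and conclude $T_f' \equiv^\sharp T$.
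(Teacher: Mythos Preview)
Your argument is essentially correct, but it takes a much more laborious route than the paper. The paper's proof is a single observation: curtailing $(T'_f,r)$ via $(S,s)$ returns $(T',r)$, so Theorem~\ref{thm:curtailDoesntChange} immediately gives $(T'_f,r)\equiv^\sharp(T',r)\equiv^\sharp(T,r)$. The point is that the only difference between $T'_f$ and $T'$ is the inserted bare paths preceding each $r_n$; since each $([T'_f]_{r_n},r_n)\equiv^\sharp(S,s)$, these are precisely the paths that the curtailing operation deletes, and nothing else is affected because $T'$ was already curtailed.

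By contrast, you rebuild the inequality $T'_f\leq^\sharp T'$ by hand, shifting along $D'$ via iterates of $\eta$ and re-embedding the off-ray subtrees at each stage. This works, but it duplicates effort already packaged into Theorem~\ref{thm:curtailDoesntChange}: the whole purpose of proving that curtailing preserves topological type was to make exactly this kind of ``absorb extra bare paths'' argument automatic. Your approach also forces you to manage the intermediate branch vertices $r_j$ explicitly, whereas the curtailing argument sidesteps that bookkeeping entirely. One minor caution in your sketch: invoking $(T'_{r_n},r_n)\equiv^\sharp(T'_{r_{k_n}},r_{k_n})$ gives an embedding of the \emph{full} subtree, ray included, so you would need to argue separately that the off-ray pieces land off the ray (or, more cleanly, just use an iterate of $\eta$ directly, since $\eta$ already sends $D'$ into $D'$).
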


\begin{proof} Notice that curtailing $(T'_f,r)$ via $(S,s)$ yields $(T',r)$ and, thus, by Theorem~\ref{thm:curtailDoesntChange} the result follows.
\end{proof}

(2) The construction of each $(T'_f,r)$, for $f\in\N^\N$, based on an unrooted tree $T$ can be applied directly to $(T,r)$ by selecting any ray $D$ in $(T,r)$ that is not eventually bare. This means that there are continuum many unrooted trees $T_f' $ with $T_f' \not \simeq T$ (hence, $(T_f',r) \not \simeq (T,r)$) but with $(T_f',r) \equiv^\sharp (T,r)$.

\end{proof}

\section{Small Trees}\label{sec:TACforSmall}

We now turn our attention to small trees. We first prove Theorem 1 for all rooted small trees and extend the result to all small trees by adapting a fixed-point result from Polat and Sabidussi to the topological minor relation.

\subsection{Rooted small trees}
Unlike the scenario encountered for large trees, given a pair small trees $(T,r) \equiv^\sharp (S,s)$ (distinct from the ray) any rooted minor embedding $\psi$ witnessing $(T,r) \leq^\sharp (S,s)$ must map $r$ to $s$. Indeed, otherwise for any $\phi$ that witnesses $(T,r) \geq^\sharp (S,s)$ it follows that the sequence $(\phi\circ\psi)^n(r)$ spans a ray in $T$ that is not eventually bare. This observations then allows us to directly employ the arguments developed in \cite{T} and \cite{A} to the rooted topological minor relation. In the following lemmas for a rooted tree $(T,r)$, $S(T) = \{(T_u,u) \mid u \in \suc(r)\}$.

\begin{lemma}\label{lem:greaterThanSubtree} For any $(T_u,u) \in S(T)$, $|[(T_u,u)]| \leq |[(T,r)]|$. 
\end{lemma}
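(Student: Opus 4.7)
The plan is to construct an injection from $[(T_u,u)]$ into $[(T,r)]$. For each representative $(S,s) \in [(T_u,u)]$, form the \emph{grafted} rooted tree $\hat{T}_S$ obtained from $(T,r)$ by removing the full subtree $(T_u,u)$ and attaching $(S,s)$ in its place, identifying $s$ with $u$. First I would verify $\hat{T}_S \equiv^\sharp (T,r)$: given embeddings witnessing $(S,s) \equiv^\sharp (T_u,u)$, one extends them by the identity on $V(T) \setminus V(T_u)$ to obtain embeddings between $\hat{T}_S$ and $(T,r)$ in both directions.

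Then I would show the assignment $[(S,s)] \mapsto [\hat{T}_S]$ descends to a well-defined, injective map on equivalence classes. Well-definedness is routine. For injectivity, suppose $\hat{T}_S \equiv^\sharp \hat{T}_{S'}$. Both grafted trees are small, since $(T,r)$ is small and smallness is preserved under topological equivalence, and by the observation opening this subsection any witnessing embeddings $\phi : \hat{T}_S \to \hat{T}_{S'}$ and $\psi : \hat{T}_{S'} \to \hat{T}_S$ must each fix the root $r$.

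The main obstacle is that $\phi$ need not map the ``$S$-position'' $u$ in $\hat{T}_S$ to the ``$S'$-position'' in $\hat{T}_{S'}$: it might instead route $u$ above a different successor $v' \in \suc(r)$ whose subtree in $\hat{T}_{S'}$ is an unmodified $(T_{v'},v')$. To handle this I would refine the construction by simultaneously grafting $(S,s)$ at \emph{every} successor $v \in \mathcal{A} := \{v \in \suc(r) : (T_v,v) \equiv^\sharp (T_u,u)\}$. Since $(S,s)$ is small and hence not self-similar, any embedding $(S,s) \leq^\sharp (S',s')$ is root-to-root, so the restriction of $\phi$ to each grafted copy of $(S,s)$ either lands inside a grafted copy of $(S',s')$ (yielding $(S,s) \leq^\sharp (S',s')$) or strictly inside some unmodified $(T_{v'},v')$ with $v' \notin \mathcal{A}$, forcing $(T_u,u) <^\sharp (T_{v'},v')$.

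The hard part is ruling out this second alternative; my strategy would be to alternate $\phi$ and $\psi$ and use the well-quasi-ordering of $\{(T_v,v) : v \in \suc(r)\}$ to show that any such ``escape'' into an unmodified subtree produces an infinite strictly ascending chain in the family of successor-subtrees, contradicting WQO. Once both $\phi$ and $\psi$ restrict to embeddings between grafted copies at positions in $\mathcal{A}$, smallness delivers $(S,s) \equiv^\sharp (S',s')$, completing injectivity and hence the lemma.
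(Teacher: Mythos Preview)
Your refined construction---replacing the subtree at \emph{every} $v\in\mathcal{A}=\{v\in\suc(r):(T_v,v)\equiv^\sharp(T_u,u)\}$ by a copy of $(S,s)$---is exactly the construction the paper uses. The gap is in your injectivity argument: you are proving the wrong implication. The elements of $[(T_u,u)]$ and of $[(T,r)]$ are \emph{isomorphism} classes (of trees lying in a fixed $\equiv^\sharp$-class), so injectivity of the assignment means
\[
\hat T_S \simeq \hat T_{S'} \ \Longrightarrow\ (S,s)\simeq(S',s').
\]
Instead you assume $\hat T_S\equiv^\sharp\hat T_{S'}$ and work towards $(S,s)\equiv^\sharp(S',s')$. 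Both of those relations hold automatically---$\hat T_S,\hat T_{S'}\in[(T,r)]$ and $(S,s),(S',s')\in[(T_u,u)]$ by design---so the conclusion you reach is vacuous and nothing about isomorphism classes has been shown. All of the machinery you invoke (smallness to pin the root, routing of minor embeddings, the alternation of $\phi$ and $\psi$, the wqo argument) is aimed at this empty target.

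Once you assume the correct hypothesis $\hat T_S\simeq\hat T_{S'}$, the argument becomes one line, exactly as in the paper: a rooted isomorphism fixes $r$ and hence carries each successor subtree isomorphically onto a successor subtree. A grafted copy of $(S,s)$ lies in $[(T_u,u)]$, so its image does too; but the only successor subtrees of $\hat T_{S'}$ in $[(T_u,u)]$ are the grafted copies of $(S',s')$ (positions outside $\mathcal{A}$ were left untouched precisely because they are \emph{not} in this class). Hence $(S,s)\simeq(S',s')$. No smallness, no self-similarity, no wqo is needed. As a side remark, the wqo step you sketch would not work anyway: well-quasi-orders forbid infinite strictly \emph{descending} chains and infinite antichains, but infinite strictly \emph{ascending} chains are perfectly compatible with wqo, so ``an infinite strictly ascending chain in the successor-subtrees, contradicting WQO'' is not a contradiction.
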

\begin{proof} The proof of this lemma is exactly that of Lemma 2 in \cite{T} but we reproduce it here. Fix a $v\in \suc(r)$ and take any $(U,u) \in [(T_v,v)]$ (i.e., a non-isomorphic tree of the same topological type as $(T_v,v)$). Construct the tree $(T_U,r)$ as follows: for any $w\in \suc(v)$
\begin{itemize}
\item replace $(T_w,w)$ with $(U,u)$ if $(T_w,w) \equiv^\sharp (U,u)$, and
\item leave $(T_w,w)$ as it is, otherwise.
\end{itemize}
By design, $(T_U,r) \equiv^\sharp (T,r)$ and $(T_U,r) \not \simeq (T,r)$. Moreover, for any other $(W,w) \in [(T_v,v)]$ non-isomorphic to $(U,u)$ it follows that $(T_U,r) \not \simeq (T_W,r)$.
\end{proof}
In view of the above and given a small tree $(T,r)$ next we prove that if $|[(T_v, v)]| =1$ for all $v \in \suc(r)$, then $ |[(T,r)]| = 1$ or $\geq \aleph_0$. This would then complete the proof of Theorem 1 for small rooted trees. Indeed, by Lemma~\ref{lem:all1}, that $\aleph_0 > (T,r) >1$ implies the same of at least one $(T_v, v)$ for $v\in \suc(r)$, which in turn implies the same of some $(T_w, w)$ for $w\in \suc(v)$, and so on. This would create a sequence $\left(|[(T_{v_n}, {v_n})]|\right)_{n\in\N}$ with $v_n \in \suc(v_{n-1})$ and $|[(T_{v_n}, {v_n})]|>1$. A contradiction since any such sequence in a small tree ends on a terminal vertex or spans an eventually bare ray (i.e., a tail of all $1$'s).

\begin{lemma}\label{lem:all1} Let $(T,r)$ be small. If for all $(T_u,u) \in S(T)$, $|[(T_u,u)]| = 1$ then $ |[(T,r)]| =1$ or $\geq \aleph_0$. 
\end{lemma}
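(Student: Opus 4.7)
The goal is to show that under the hypothesis $|[(T_u,u)]|=1$ for every $u\in\suc(r)$, we have $|[(T,r)]|=1$; the dichotomy stated then holds with the first alternative. Suppose $(S,s)\equiv^\sharp(T,r)$ and fix witnessing embeddings $\phi:(T,r)\to(S,s)$ and $\psi:(S,s)\to(T,r)$. By the observation preceding the lemma, both embeddings must fix the root (since $(T,r)$ is small and not a ray, and likewise for $(S,s)$). Because $\phi$ is a meet-semilattice homomorphism, distinct $u,u'\in\suc(r)$ meet at $r$, so $\phi(u)\wedge_S\phi(u')=s$ in $S$, and the images lie in distinct branches above $s$. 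Writing $v(u)\in\suc(s)$ for the unique successor of $s$ satisfying $v(u)\leq_S\phi(u)$, this yields an injection $u\mapsto v(u):\suc(r)\hookrightarrow\suc(s)$, and symmetrically $\psi$ yields an injection $\suc(s)\hookrightarrow\suc(r)$.

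The technical core is to show that $\phi(u)=v(u)$ for every $u\in\suc(r)$; that is, no edge $ru$ is properly subdivided by $\phi$. Once this is established, $\phi$ restricts to a rooted topological minor embedding $(T_u,u)\leq^\sharp(S_{v(u)},v(u))$, and symmetrically for $\psi$, so that $(T_u,u)\equiv^\sharp(S_{v(u)},v(u))$. The hypothesis $|[(T_u,u)]|=1$ then forces $(T_u,u)\simeq(S_{v(u)},v(u))$, so that $u\mapsto v(u)$ becomes a type-preserving bijection $\suc(r)\leftrightarrow\suc(s)$ and $(T,r)\simeq(S,s)$ follows.

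The main obstacle, which I expect to be the hard part, is ruling out the \emph{pass-through} case $\phi(u)>_S v(u)$. My approach is to analyze the root-fixing self-embedding $\psi\circ\phi:(T,r)\to(T,r)$: any branching of $S$ off the path from $s$ to $\phi(u)$ that is not covered by $\phi(V(T))$ must nonetheless be sent by $\psi$ into some branch $(T_{u'},u')$, producing an embedding of $(T_u,u)$ into $(T_{u'},u')$ that strictly decorates the top of $(T_{u'},u')$. Iterating this observation yields a sequence of strict $\leq^\sharp$-comparisons within the rigid collection $\{(T_u,u):u\in\suc(r)\}$. Combining the wqo property of the rooted topological minor relation with the smallness of $(T,r)$ (no subtree is self-similar) and the rigidity hypothesis (so any topological equality is forced to be an isomorphism), I expect to derive a contradiction that rules out pass-through and completes the proof.
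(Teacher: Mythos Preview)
Your overall strategy is to prove that the first alternative $|[(T,r)]|=1$ must always hold under the hypothesis. This is false, so the approach cannot succeed. A concrete counterexample: let $(T,r)$ have $\suc(r)$ countably infinite with every $(T_u,u)$ isomorphic to a fixed finite rooted tree $Y$ with $\spl$ of its root at least $2$ (say a cherry). Each such subtree is finite, hence $|[(T_u,u)]|=1$. Now let $(S,s)$ be obtained from $(T,r)$ by attaching one additional leaf to $r$. Then $(S,s)\leq^\sharp(T,r)$ by deleting that leaf, and $(T,r)\leq^\sharp(S,s)$ because $S$ still contains $\aleph_0$ copies of $Y$ above $s$; yet $(T,r)\not\simeq(S,s)$. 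Both trees are rayless, hence small. So the lemma genuinely needs the second alternative, and the paper's proof works in exactly this direction: assuming $|[(T,r)]|>1$, it builds $\aleph_0$ pairwise non-isomorphic trees in $[(T,r)]$ by varying the multiplicity of a suitable isomorphism type among the successor subtrees.

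It is worth seeing where your argument actually breaks on this example. First, even granting $\phi(u)=v(u)$ for every $u$, the map $u\mapsto v(u)$ is only an injection $\suc(r)\hookrightarrow\suc(s)$, and the symmetric injection from $\psi$ need not invert it; in the example the extra leaf of $S$ is not hit by $v$, so you never obtain a type-preserving \emph{bijection}. Second, your plan to rule out pass-through fails here too: for $\psi:(S,s)\to(T,r)$ the extra leaf of $S$ must be sent strictly above some $u\in\suc(r)$, so pass-through is unavoidable, yet no contradiction arises because the single-vertex subtree and the cherry are $\leq^\sharp$-comparable without being equivalent. The wqo and smallness hypotheses do not forbid this; they only forbid a subtree embedding strictly into itself, which is not what is happening. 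In short, the rigidity hypothesis on the successor subtrees controls their \emph{isomorphism types} but not their \emph{multiplicities}, and the whole content of the lemma lives in that gap.
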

\begin{proof} Lemma 2 in \cite{A} after exchanging rooted subgraph with rooted topological minor but we also reproduce a simplified version here. Let $(T,r)$ and $(U,u)$ be two non-isomorphic trees with $(U,u)\in [(T,r)]$, and rooted embeddings $\phi_T: V(T) \to V(U)$ and $\phi_U: V(U) \to V(T)$ witnessing this. Further assume that $|[(T_v, v)]| =1$ for any $v \in \suc(r)$. We seek to prove that $|[(T,r)]| \geq \aleph_0$. 

Put $\mathcal{I}= \{(X_\alpha,\alpha) \mid \alpha\in \kappa\}$  as the set of all isomorphism types of all rooted trees $(T_v,v)$ and $(U_w,w)$ with $v\in \suc(r)$  and $w\in \suc(u)$. Define $f_T: \{(T_v,v)\mid v\in \suc(r)\} \to \kappa$ so that $f_T(T_v,v) = \alpha$ provided $(T_v,v) \simeq (X_\alpha, \alpha)$ (resp. $f_U: \{(U_w,w)\mid w \in \suc(u)\} \to \kappa$ so that $f_U(U_w, w) = \alpha$ provided $(U_w, w) \simeq (Y_\alpha, \alpha)$). Since $(T,r) \not \simeq (U,u)$ there must exist a $\beta \in \kappa$ for which, without loss of generality, $ |f_T^{-1}(\beta)| < |f_U^{-1}(\beta)|$. Put $\eta = |f_T^{-1}(\beta)|$ and $\zeta = |f_U^{-1}(\beta)|$, and enumerate the elements of $f_T^{-1}(\beta)$ and $f_U^{-1}(\beta)$ as $(T_i,i)$ and $(U_j,j)$ with $i\in \eta$ and $j\in \zeta$. Before moving further with this proof, let us establish some more notation: put $(T^*,r)$ as $(T,r)$ but with all full subtrees $(T_i,i)$, $i \in \eta$, removed - of course, this also includes the roots $i$ of each $(T_i,i)$ and the edge joining $i$ with $r$. In what follows we fix a $(T_k,k) \in f_T^{-1}(\beta)$ and for each cardinal $\lambda$ denote $(T^*_\lambda,r)$ as $(T^*,r)$ with $\lambda$ copies  ($(T^*_{\lambda_l}, {\lambda_l})$ with $l\in \lambda$)  of $(T_k,k)$ attached to $r$. Clearly, $(T^*_\eta,r) \simeq (T,r)$ and $(T^*_\nu,r) \not \simeq (T^*_\xi,r)$ for any pair of cardinals $\nu \not = \xi$.

Fix a $\gamma \in \zeta$ for which $\phi_U(U_\gamma,\gamma) \subseteq (T_v,v)\not \in f_T^{-1}(\beta)$. Next, for each $n\in \omega$, put $w_n \in \suc(s)$ so that $(\phi_T\circ\phi_U)^n[(U_\gamma,\gamma)] \subseteq (U_{w_n}, w_n)$ and $v_n\in \suc(r)$ with $\phi_U\circ(\phi_T\circ\phi_U)^n[(U_\gamma,\gamma)] \subseteq (T_{v_n}, v_n)$. Clearly, $(U_{w_0},w_0) = (U_\gamma,\gamma)$ and $(T_{v_0}, v_0) = (T_v, v)$, and 
\[
(U_{w_0},w_0) \leq^* (T_{v_0},v_0) \leq^*(U_{w_1},w_1) \leq^*(T_{v_1},v_1) \leq^* \ldots
\]
 Observe that $w_n \not = w_0$ for any $n\in \omega$ since then $(U_\gamma, \gamma) \simeq (T_v,v)$ from the assumption that $|[(T_u, u)]| =1$ for any $u \in \suc(r)$. The same logic dictates $(U_{w_n},w_n) \not \in f_U^{-1}(\beta)$ and $(T_{v_n},v_n) \not \in f_T^{-1}(\beta)$ for any $n>1$. Thus, we get two sequences of non-repeating elements: $(w_n)_{n\in\omega}$ and $(v_n)_{n\in\omega}$. \\

If $\eta$ is finite, then for each $\lambda \geq \eta$ it follows that $(T^*_{\lambda},\lambda) \geq^\sharp (T,r)$. For any for each $\lambda \in (\eta,\infty)$ put $\phi_\lambda: V(T^*_{\lambda}) \to V(T)$ with
\begin{itemize}
\item $\phi_\lambda((T^*_{\lambda_l},{\lambda_l})) = \phi_U\circ(\phi_T\circ\phi_U)^l[(U_\gamma,\gamma)] $ for each $l \leq \lambda$, 
\item $\phi_\lambda((T_{v_n},{v_n})) = \phi_U\circ(\phi_T\circ\phi_U)^{n+\lambda+1}[(U_\gamma,\gamma)]$ for each $n \in \omega$, and
\item the identity everywhere else.
\end{itemize}
Since $(U_\gamma,\gamma) \equiv^\sharp (T_k,k)$, $\phi_\lambda$ is a witness of $(T^*_{\lambda},\lambda) \leq^\sharp (T,r)$ for each finite $\lambda \geq \eta$ but $(T^*_{\lambda},\lambda) \not \simeq (T,r)$ for any $\lambda > \eta$. If $\eta \geq \aleph_0$, then since $\zeta > \eta$ one can easily adapt the proof of finite $\eta$ to create $\zeta$ many non-isomorphic $(T^*_{\lambda},\lambda) $, $\lambda \in \zeta$, of the same topological type as $(T,r)$.

\end{proof}

\subsection{Unrooted small trees}

Given a rayless tree $T$ the authors of \cite{P} showed that there exist a vertex or edge of $T$ fixed by all automorphisms of $T$. By modifying the arguments used in the above to minor embeddings one can easily obtain an analogous result for all small trees different to the ray or double ray. Firstly, let us remark that a minor self-embedding of a finite tree $T$ is necessarily an automorphism and, as such, it must either fix a vertex or an edge of $T$ - \cite{H}, Lemma 2. In fact, more is true: given a set $S \subseteq V(T)$ let $\overline{S}$ denote the subtree of $T$ generated by the union of all paths joining any pair of vertices in $S$ , $F_T = \{v\in V(T_\lambda)\mid \deg(v)>2\}$ and $\text{TME}(T)$ as the collection of all topological minor embeddings of $T$. Let $T$ be locally finite, small, infinite and distinct from the ray or double ray. It follows that $F_T\not = \emptyset$ is finite (since $T$ is small) and any minor embedding $\psi$ induces a permutation of $F_T$ where $\deg(v) = \deg(\psi(v))$ for all $v\in F_T$. It then follows that $\psi\upharpoonright\overline{F_T}\in \text{TME}(\overline{F_T})$ and, thus, that $\psi$ induces an automorphism of $\overline{F_T}$. Since $\overline{F_T}$ is finite then $\psi$ restricted to $\overline{F_T}$ is actually an automorphism and $\overline{F_T}$ has either a fixed edge or vertex by all $\psi \in \text{TME}(T)$. It then follows that $T$ itself has a fixed edge or vertex for all $\psi \in \text{TME}(T)$.

\begin{lemma}\label{lem:fixLocFiniteandSmall} Any small and locally finite tree $T$, distinct from the ray or double ray, has an edge or vertex fixed by all $\psi \in \text{TME}(T)$.
\end{lemma}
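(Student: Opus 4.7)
The plan is to formalize the sketch given in the paragraph immediately preceding the lemma. I would proceed in four steps, closely following that outline.

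First, I would establish that $F_T$ is finite and nonempty. Nonemptiness is immediate since $T$ is neither a ray nor a double ray, so some vertex has degree at least $3$. For finiteness, I would argue by contradiction: if $F_T$ were infinite, then rooting $T$ at any fixed $v_0 \in F_T$ and iterating a pigeonhole argument (using that $T$ is locally finite, so each vertex has finitely many children, and that an infinite set of $F_T$-vertices in a subtree must concentrate in some child-subtree) produces a strictly increasing chain $v_0 <_T v_1 <_T v_2 <_T \ldots$ with each $v_n \in F_T$. This chain spans a ray in $T$ with infinitely many splitting vertices, contradicting the hypothesis that $T$ is small.

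Second, I would verify that every $\psi \in \text{TME}(T)$ restricts to a degree-preserving permutation of $F_T$. Any topological minor embedding satisfies $\deg_T(\psi(v)) \geq \deg_T(v)$, since the edges incident to $v$ in $T$ extend to internally disjoint paths issuing from $\psi(v)$ in $T$. Consequently $\psi(F_T) \subseteq F_T$; combined with injectivity and the finiteness of $F_T$, this forces $\psi\upharp F_T$ to be a bijection. Summing the inequality $\deg_T(\psi(v)) \geq \deg_T(v)$ over $v \in F_T$ and noting that both sides have the same total, we obtain pointwise equality $\deg_T(\psi(v)) = \deg_T(v)$ for each $v \in F_T$.

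Third, I would argue that $\psi$ induces an automorphism of the finite tree $\overline{F_T}$. Every path in $T$ joining two $F_T$-vertices $u,v$ corresponds under the minor embedding to a path in $T$ from $\psi(u)$ to $\psi(v)$, and the degree-preservation at $F_T$ ensures these image paths carry no extra branches at their $F_T$-endpoints that would escape $\overline{F_T}$. Hence $\psi\upharp \overline{F_T} \in \text{TME}(\overline{F_T})$, and since any topological minor self-embedding of a finite tree is an injection from a finite set to itself that preserves the minor relation, it is automatically an automorphism.

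Finally, I would invoke the classical result (Halin, Lemma 2 of \cite{H}) that every finite tree has a canonical center consisting of either a single vertex or a single edge, preserved by every automorphism of the tree. The center of $\overline{F_T}$ is therefore a vertex or edge of $T$ that is fixed by the restriction of every $\psi \in \text{TME}(T)$, and so is fixed by all of $\text{TME}(T)$. The main obstacle I anticipate lies in the third step: rigorously verifying that $\psi\upharp\overline{F_T} \in \text{TME}(\overline{F_T})$ requires careful bookkeeping of which vertices of $T$ lie in $\overline{F_T}$ and why the image paths between $\psi$-images of $F_T$-vertices cannot detour outside of $\overline{F_T}$. Once this invariance is secured, the remaining steps are routine consequences of the degree count and the classical center theorem.
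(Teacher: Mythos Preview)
Your proposal is correct and follows exactly the argument the paper gives in the paragraph immediately preceding the lemma: show $F_T$ is finite and nonempty, that every $\psi\in\text{TME}(T)$ permutes $F_T$ preserving degree, that $\psi\upharpoonright\overline{F_T}$ is therefore an automorphism of the finite tree $\overline{F_T}$, and then invoke the classical center theorem. The only minor omission is that your nonemptiness claim in Step~1 fails for finite paths (which are small, locally finite, and neither a ray nor a double ray); the paper handles the finite case separately by the remark that any minor self-embedding of a finite tree is already an automorphism, so you should dispose of finite $T$ at the outset before assuming $F_T\neq\emptyset$.
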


We extend the above to all small trees by means of employing the following lemma from \cite{P}.

\begin{lemma}\label{lem:Polat}[Lemma 1.1, \cite{P}] Let $G$ be a rayless graph, $(A_\alpha)_{\alpha\in \text{Ord}}$ a decreasing sequence of subsets of $V(G)$ so that 
\begin{enumerate}
\item $A_\alpha = \bigcap_{\beta < \alpha} A_\beta$ for any limit $\alpha$ and
\item each $A_\alpha$ induces a connected subgraph of $G$.
\end{enumerate}
By $\kappa$ denote the smallest ordinal for which the sequence $(A_\alpha)_{\alpha\in \text{Ord}}$ becomes constant. If $A_\kappa =\emptyset$, then $\kappa$ is a successor ordinal.
\end{lemma}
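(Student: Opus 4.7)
The plan is to argue by contradiction: assume $\kappa$ is a limit ordinal with $A_\kappa=\emptyset$, and produce a ray in $G$, contradicting raylessness.

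First I would establish two preliminaries. Minimality of $\kappa$ forces $A_\alpha\neq\emptyset$ for every $\alpha<\kappa$, since if some $A_{\alpha_0}$ were empty, then the decreasing sequence would be constantly $\emptyset$ from $\alpha_0$ onward, contradicting the choice of $\kappa$. Second, for each $v\in A_0$, the least ordinal $\alpha$ with $v\notin A_\alpha$ satisfies $\alpha<\kappa$ (otherwise $v\in\bigcap_{\beta<\kappa}A_\beta=A_\kappa=\emptyset$ by condition (1)) and must be a successor: were it a limit $\lambda$, condition (1) would give $v\notin A_\lambda=\bigcap_{\beta<\lambda}A_\beta$, and so $v\notin A_\beta$ for some $\beta<\lambda$, violating minimality. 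Write this ordinal as $\beta(v)+1$, so $v\in A_{\beta(v)}\setminus A_{\beta(v)+1}$.

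Next I would recursively build distinct vertices $v_0,v_1,\ldots$ together with finite paths $P_1,P_2,\ldots$ in $G$. Pick any $v_0\in A_0$ and set $\gamma_n=\beta(v_n)$. Given $v_n$, since $\kappa$ is a limit we have $\gamma_n+1<\kappa$, so $A_{\gamma_n+1}$ is nonempty; choose $v_{n+1}$ there, whence $\gamma_{n+1}>\gamma_n$. Because both $v_n,v_{n+1}$ lie in the connected set $A_{\gamma_n}$, a finite path $P_{n+1}$ from $v_n$ to $v_{n+1}$ exists inside $A_{\gamma_n}$. Strict monotonicity of $(\gamma_n)$ makes the $v_n$ pairwise distinct, since for $n<m$ the vertex $v_n$ is not in $A_{\gamma_n+1}\supseteq A_{\gamma_m}\ni v_m$.

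Finally, set $H=\bigcup_{n\geq 1}P_n$, an infinite connected subgraph of $G$. The crucial observation is that $H$ is locally finite: any $u\in V(H)$ lying on $P_n$ must belong to $A_{\gamma_{n-1}}$, i.e., $\gamma_{n-1}\leq\beta(u)$; strict increase of $(\gamma_n)$ allows only finitely many such $n$, and each $P_n$ contributes at most two edges incident to $u$. König's infinity lemma then produces a ray in $H$, hence in $G$, giving the contradiction. I expect the main delicate point to be verifying that $\beta(v)$ is genuinely a successor ordinal — the only place where condition (1) is used essentially — since without it the recursive step could fail to advance the index $\gamma_n$.
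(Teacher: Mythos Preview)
The paper does not prove this lemma: it is quoted verbatim as Lemma~1.1 of Polat--Sabidussi \cite{P} and used as a black box, so there is no in-paper argument to compare against. Your contradiction argument is correct and is essentially the standard one: the exit-time function $\beta(v)$ is well defined and successor-valued by condition~(1), the strictly increasing sequence $(\gamma_n)$ and the connecting paths $P_{n+1}\subseteq A_{\gamma_n}$ come from condition~(2), and the local finiteness of $H=\bigcup_n P_n$ follows exactly as you say, after which K\"onig's lemma yields the forbidden ray. The only cosmetic point is that you might state explicitly that $\kappa>0$ (hence $A_0\neq\emptyset$) before choosing $v_0$; this is immediate since a limit ordinal is nonzero, but it is the one place a reader might pause.
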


\begin{lemma}\label{lem:fixed} Given a small tree $T$, distinct from the ray or double ray, there exists a vertex or edge of $T$ fixed by all $\psi \in \textrm{TME}(T)$.
\end{lemma}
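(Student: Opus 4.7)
The plan is to reduce the small case to the locally finite case handled by Lemma~\ref{lem:fixLocFiniteandSmall}, using Lemma~\ref{lem:Polat} as the transfinite engine. The key auxiliary object is the subtree $\overline{F_T}$ generated by the branching vertices of $T$. Since $T$ is small, $\overline{F_T}$ is rayless: a ray in $\overline{F_T}$ would, by concatenating the finite $T$-paths joining its consecutive vertices, yield a ray in $T$ passing through infinitely many vertices of degree at least $3$, contradicting smallness. Moreover, every $\psi \in \text{TME}(T)$ satisfies $\psi(F_T) \subseteq F_T$, since at a vertex $v$ with $\deg(v) \geq 3$ the three or more pairwise disjoint branches at $v$ are carried injectively to pairwise disjoint paths emanating from $\psi(v)$, forcing $\deg(\psi(v)) \geq 3$. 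Consequently each such $\psi$ induces a topological minor self-embedding $\tilde\psi$ of $\overline{F_T}$.

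I would then construct a transfinite decreasing sequence $(A_\alpha)_{\alpha \in \text{Ord}}$ of nonempty connected subsets of $V(\overline{F_T})$, each invariant under every $\tilde\psi$, by adapting the Polat-Sabidussi rayless pruning to the topological minor context: at limit ordinals take intersections; at successor ordinals strip off a TME-invariant peripheral layer of $A_\alpha$, defined for instance as the set of vertices in $A_\alpha$ whose rayless rank in the induced subtree is minimal. By Lemma~\ref{lem:Polat}, the sequence stabilizes at some successor ordinal $\kappa$, and by construction $A_\kappa$ is nonempty and finite. The induced subtree $T_\kappa$ of $T$ is then a finite $\text{TME}(T)$-invariant subtree, so every $\psi \in \text{TME}(T)$ restricts to a topological minor self-embedding of $T_\kappa$. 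Since any such self-embedding of a finite tree is an automorphism and every automorphism of a finite tree fixes a vertex or edge (Halin, \cite{H}, Lemma 2), the simultaneous fixed vertex or edge of $T_\kappa$ under all such restrictions is the desired fixed vertex or edge of $T$.

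The main obstacle is the successor-step construction: the peripheral layer must be simultaneously $\tilde\psi$-invariant for every $\psi \in \text{TME}(T)$, a proper subset of $A_\alpha$ whenever $A_\alpha$ is infinite, and such that $A_{\alpha+1}$ remains connected. The non-surjectivity of TMEs is the key distinction from the Polat-Sabidussi setting for automorphisms, since in that setting one can freely use the inverse. I would resolve this by exploiting the well-quasi-order property of rooted trees under $\leq^\sharp$: at each branching vertex of $\overline{F_T}$ the topological types of its rooted branches fall into only finitely many $\equiv^\sharp$-classes, and this finitary combinatorial structure yields a canonical TME-invariant choice of peripheral layer to drive the induction.
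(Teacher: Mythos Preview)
Your overall strategy---reduce to a locally finite (or finite) $\text{TME}$-invariant subtree by transfinite recursion and invoke Lemma~\ref{lem:Polat}---is the same as the paper's. The gap is in the successor step, which you correctly flag as the main obstacle but do not actually resolve.

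Your proposed wqo fix rests on a false claim: the rooted branches at a vertex need not fall into finitely many $\equiv^\sharp$-classes. For instance, attach to a single vertex a rooted path of length $i$ as its $i$th branch; finite rooted paths of distinct lengths are pairwise $\equiv^\sharp$-inequivalent, so there are infinitely many classes. Well-quasi-ordering forbids infinite antichains, not infinite chains of types, so it does not deliver the finiteness you need. Your alternative suggestion, stripping vertices of minimal rayless rank, also fails to be $\text{TME}$-invariant in any obvious way: a topological minor self-embedding can send a leaf of $\overline{F_T}$ to an interior vertex, so the rank of $\tilde\psi(v)$ is not controlled by the rank of $v$. Without a concrete, provably invariant successor step the recursion does not run. (Your reading of Lemma~\ref{lem:Polat} is also slightly off: it addresses sequences that stabilize at $\emptyset$, not at a nonempty finite set; the paper uses it in the former sense.)

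The paper sidesteps the whole difficulty by choosing a different invariant. Rather than work inside $\overline{F_T}$ and search for an exotic $\text{TME}$-invariant layer, it iterates the operation $S \mapsto \overline{\text{infinite}(S)}$, where $\text{infinite}(S)$ is the set of vertices of $S$ of infinite degree. Since any minor embedding sends a vertex of infinite degree to a vertex of infinite degree, this set---and hence the subtree it generates---is automatically invariant under every $\psi \in \text{TME}(T)$, with no wqo input required. The resulting sequence $(T_\alpha)$ is strictly decreasing inside the rayless tree $T_1$ until it becomes empty, and by Lemma~\ref{lem:Polat} this occurs at a successor $\kappa=\lambda+1$; then $T_\lambda$ is nonempty, locally finite, small, and invariant, so Lemma~\ref{lem:fixLocFiniteandSmall} applies directly.
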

\begin{proof} Given Lemma~\ref{lem:fixLocFiniteandSmall}, let $T$ be small but not locally finite, and set $T_1 = \overline{\text{infinite}(T)}$, where $\text{infinite}(T) \subset V(T)$ is the set consisting all vertices of infinite degree. Clearly, $T_1$ is connected, rayless, properly contained in $T$ - since $T_1$ does not contain any end-points or rays from $T$ - and for any $\psi\in \text{TME}(T)$ we have $\psi\upharpoonright T_1 \in \text{TME}(T_1)$. For all $\alpha \in \text{Ord}$ construct $T_\alpha$ as

\[
T_\alpha =
\begin{cases} \overline{\text{inf}(T_\beta)} & \text{ if $\alpha = \beta + 1$ and}\\

 \bigcap_{\beta < \alpha} T_\beta,& \text{ otherwise}
\end{cases}
\]

\medskip

Let $\kappa$ denote the smallest ordinal for which $(T_\alpha)$ is constant. This happens when $T_\kappa = \emptyset$. Indeed, otherwise the sequence $(T_\alpha)$ would not be constant. Therefore, by Lemma~\ref{lem:Polat}, that $\kappa = \lambda +1$ for some $\lambda \in \text{Ord}$. In turn, $T_\lambda$ is locally finite and small and any such $\psi$ defines an automorphism on $T_\lambda$. Hence, there must exist a vertex of edge of $T$ fixed by all $\psi \in \text{TME}(T)$.
\end{proof}

Notice that Lemma~\ref{lem:fixed} establishes a dichotomy in the following sense; if an edge $e= \{r,s\}$ in a tree $T$ is fixed by all minor embeddings with $\psi(r) = s$, for all $\psi\in \text{TME}(T)$, then $T$ does not have a fixed vertex - the converse is also true. Indeed, all minor embeddings swap $s$ with $r$ precisely when any vertex in $V$ is sent to the opposite connected component of $T\smallsetminus\{e\}$. This observation is important for the proof of the following theorem.

 \begin{theorem} For any small tree $T$, $|[T]| = 1$ or $|[T]| \geq \aleph_0$. 
 \end{theorem}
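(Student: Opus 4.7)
The plan is to reduce the unrooted small case to the rooted small case (proved via Lemma~\ref{lem:all1}) using the canonical fixed point supplied by Lemma~\ref{lem:fixed}. When $T$ is a ray or double ray, $|[T]|=1$ is immediate, since any $S\equiv^\sharp T$ has only vertices of degree at most two, is infinite, and has the same number of ends as $T$. Otherwise Lemma~\ref{lem:fixed} gives $T$ either a fixed vertex or a fixed swapping edge $e=\{r,s\}$. In the edge case, I would subdivide $e$ by inserting a midpoint $v^*$ to obtain $\tilde T\equiv^\sharp T$; every $\psi\in\text{TME}(\tilde T)$ descends to a TME of $T$ that preserves $\{r,s\}$ setwise, and so must fix the midpoint $v^*$ of the length-$2$ path $r\mathbin{-}v^*\mathbin{-}s$. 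Since $[\tilde T]=[T]$, we may replace $T$ by $\tilde T$ and assume WLOG that $T$ itself has a fixed vertex $v$.

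With $(T,v)$ a small rooted tree, the rooted version of the theorem (Lemma~\ref{lem:all1} together with the inductive argument sketched after it) gives $|[(T,v)]|\in\{1\}\cup[\aleph_0,\infty)$. I would then prove $|[T]|=|[(T,v)]|$ by constructing a bijection $\beta\colon [(T,v)]\to [T]$ sending $[(S,w)]\mapsto [S]$. The crux is the following claim: for every $S\equiv^\sharp T$, the tree $S$ admits a canonical fixed vertex $v_S$, and every TME $\phi\colon T\to S$ satisfies $\phi(v)=v_S$.

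For the claim, fix any TME $\phi'\colon S\to T$. Given $\psi\in\text{TME}(S)$, the composition $\phi'\circ\psi\circ\phi$ lies in $\text{TME}(T)$ and so fixes $v$, giving $\phi'(\psi(\phi(v)))=v=\phi'(\phi(v))$; injectivity of $\phi'$ then forces $\psi(\phi(v))=\phi(v)$. Thus $\phi(v)$ is a vertex of $S$ fixed by every $\psi\in\text{TME}(S)$, so by the dichotomy following Lemma~\ref{lem:fixed} the canonical fixed set of $S$ cannot be a swapping edge; $S$ therefore has a fixed vertex, which we take canonically as $v_S$ (chosen as in the proof of Lemma~\ref{lem:fixed}). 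This pins down $\phi(v)=v_S$, and symmetrically $\phi'(v_S)=v$, giving $(S,v_S)\equiv^\sharp (T,v)$. Surjectivity of $\beta$ follows at once, and injectivity follows because any $(S,w)\in[(T,v)]$ satisfies $w=v_S$ by the same argument, while any unrooted isomorphism $S\simeq S'$ must carry $v_S$ to $v_{S'}$.

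The principal technical obstacle is making rigorous that the canonical fixed vertex $v_S$ is genuinely well-defined (unique given $S$) and respected by TMEs. In the locally finite case this is clean: a bi-embedding $T\to S$, $S\to T$ forces $|F_T|=|F_S|$ via the cardinality chain $|F_T|=|\phi(F_T)|\leq|F_S|=|\phi'(F_S)|\leq|F_T|$, whence $\phi$ restricts to an isomorphism of the finite trees $\overline{F_T}\simeq\overline{F_S}$, necessarily carrying center to center. The general small case reduces to this via the iterated core $T_\lambda$ from the proof of Lemma~\ref{lem:fixed}, since every TME of $T$ restricts to a TME of each $T_\alpha$ and in particular to a TME of the locally finite small core $T_\lambda$.
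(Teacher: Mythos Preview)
Your reduction of the fixed-edge case to the fixed-vertex case by subdividing has a genuine gap: the assertion $\tilde T\equiv^\sharp T$ is false in general. Take $T$ to be two vertices $u,w$ joined by an edge, each carrying countably many bare rays. This tree is small, has $|[T]|>1$ (attach an extra leaf at $u$ to get a non-isomorphic member of $[T]$), and its fixed element is the edge $\{u,w\}$ (the swap $u\leftrightarrow w$ has no fixed vertex). After subdividing, $\tilde T$ has its two infinite-degree vertices at distance $2$; any minor embedding $\tilde T\to T$ must send these to the only infinite-degree vertices $u,w$ of $T$, which are at distance $1$ --- impossible. Hence $\tilde T\not\leq^\sharp T$ and $[\tilde T]\neq[T]$. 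Your argument that every $\psi\in\textrm{TME}(\tilde T)$ fixes $v^*$ may well be correct, but it does not give what you need, namely that $\tilde T$ and $T$ lie in the same $\equiv^\sharp$-class.

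By contrast, your treatment of the fixed-vertex case is essentially sound and in fact yields more than the paper's argument (an actual bijection $[(T,v)]\to[T]$ rather than the paper's pigeonhole extraction of infinitely many unrooted classes from infinitely many rooted ones). Two remarks: you do not need the ``canonical'' fixed vertex from Lemma~\ref{lem:fixed}; simply setting $v_S:=\phi(v)$ for any TME $\phi\colon T\to S$ is well-defined, since for two such maps $\phi,\phi_1$ and any $\phi'\colon S\to T$ one has $\phi'(\phi(v))=v=\phi'(\phi_1(v))$ and hence $\phi(v)=\phi_1(v)$ by injectivity. The paper handles the edge case directly, without subdivision: rooting at one endpoint $r$, it observes that $\psi_T$ together with the triple composite $\psi_{T^*}\circ\psi_T\circ\psi_{T^*}$ witnesses $(T,r)\equiv^\sharp(T^*,\psi_T(r))$ (the triple is needed so that a possible swap of $\{r,s\}$ under $\psi_{T^*}\circ\psi_T$ is undone), and then runs a short parity argument on triples of rooted representatives. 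You can repair your proof by adopting that step for the edge case.
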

\begin{proof} The proof of Theorem 1 in \cite{A} applies directly to this theorem and, for completeness, we reproduce it below.

 Let $T$ be small and assume that $|[T]| > 1$. Next we show that $|[T]|\geq \aleph_0$. Since $T$ is small then, by Lemma~\ref{lem:fixed}, $T$ has either a fixed vertex $r$ or edge $\{r,s\}$. Let $T^*$ be a non-isomorphic topologically equivalent tree to $T$, and $\psi_T$, $\psi_{T^*}$ denote minor embedding witnessing $T \leq^\sharp T^*$ and $T \geq^\sharp T^*$, respectively. Consider the tree $(T,r)$ and observe that $(T,r)$ and $(T^*,\psi_T(r))$ are not isomorphic. If $r$ is a fixed vertex of $T$ then $(T,r)$ and $(T^*,\psi_T(r))$ are of the same topological type as witnessed by $\psi_T$, $\psi_{T^*}$. Hence, $|[(T,r)]| \geq \aleph_0$. If $\{r,s\}$ is a fixed edge then the rooted embeddings $\psi_T$ and $\psi_{T^*}\circ \psi_{T}\circ \psi_{T^*}$ witness $(T,r)\equiv^\sharp(T^*,\psi_T(r))$ and $|[(T,r)]| \geq \aleph_0$ as well. In either case, $(T,r)$ is topologically equivalent to at least infinitely many other non-isomorphic rooted trees.
 
By virtue of the previous paragraph, for a small tree $T$ with $|[T]| > 1$ we let $C=\{(T_i, r_i)\mid i \in \omega\}$ be an infinite collection of mutually non-isomorphic rooted trees topologically equivalent to $(T,r)$. We show that $\{T_i \mid i\in \omega\}$ contains an infinite collection of non-isomorphic trees topologically equivalent to $T$. In order to achieve this goal, we show that given any triplet $(T_j, r_j), (T_k, r_k)$ and $(T_l, r_l)$ from $C$ at least one pair from the set $\{T_j, T_k, T_l\}$ are non-isomorphic. For the sake of contradiction assume otherwise and let $h_j:V(T_l) \to V(T_j)$ and $h_k:V(T_l) \to V(T_k)$ denote the respective isomorphisms - observe that $h_j(r_l)\not = r_j$ and $h_k(r_l)\not = r_k$. Since $(T,r)$, $(T_j, r_j), (T_k, r_k)$ and $(T_l, r_l)$ are of the same topological type there exist rooted minor embeddings $g_j: V(T_j) \to V(T)$, $g_k: V(T_k) \to V(T)$ and $g_l: V(T) \to V(T_l)$ witnessing this. Recall that rooted minor embeddings between small trees must map roots to roots. It follows that $g_k\circ h_k\circ g_l(r) \not = r \not =g_j\circ h_j\circ g_l(r)$. In turn, the edge $\{r,s\}$ is a fixed edge of $T$ and $g_k\circ h_k\circ g_l(r) = g_j\circ h_j\circ g_l(r) = s$ and $g_k\circ h_k\circ g_l(s) = g_j\circ h_j\circ g_l(s) = r$. The contradiction is then the following one: since $ h_j^{-1}(r_j) =g_l(s) = h_k^{-1}(r_k)$ then $h_j\circ h_k^{-1}$ establishes an isomorphism between $(T_j, r_j)$ and $(T_k, r_k)$ - impossible by our assumption.
\end{proof}

\section{Acknowledgments}

We would like to extend our sincere gratitude to the reviewers for their many helpful suggestions and comments.

\end{document}